\documentclass[preprint,12pt]{elsarticle}

\usepackage[T1]{fontenc}
\usepackage[utf8]{inputenc}
\usepackage{lmodern}
\usepackage{microtype}
\usepackage{amsmath,amssymb,amsthm,mathtools}
\usepackage{booktabs,array,tabularx}
\usepackage{enumitem}
\usepackage{xcolor}
\usepackage{aliascnt}
\usepackage{tikz}
\usetikzlibrary{calc}
\usepackage{hyperref}
\usepackage[nameinlink,capitalise,noabbrev]{cleveref}

\hypersetup{
  colorlinks=true,
  linkcolor=blue!45!black,
  citecolor=blue!45!black,
  urlcolor=blue!45!black
}

\journal{xxx}

\newtheorem{theorem}{Theorem}[section]

\newaliascnt{lemma}{theorem}
\newtheorem{lemma}[lemma]{Lemma}
\aliascntresetthe{lemma}

\newaliascnt{proposition}{theorem}
\newtheorem{proposition}[proposition]{Proposition}
\aliascntresetthe{proposition}

\newaliascnt{corollary}{theorem}
\newtheorem{corollary}[corollary]{Corollary}
\aliascntresetthe{corollary}

\theoremstyle{definition}
\newaliascnt{definition}{theorem}
\newtheorem{definition}[definition]{Definition}
\aliascntresetthe{definition}

\newaliascnt{problem}{theorem}
\newtheorem{problem}[problem]{Problem}
\aliascntresetthe{problem}

\newaliascnt{example}{theorem}
\newtheorem{example}[example]{Example}
\aliascntresetthe{example}

\theoremstyle{remark}
\newaliascnt{remark}{theorem}
\newtheorem{remark}[remark]{Remark}
\aliascntresetthe{remark}

\crefname{theorem}{Theorem}{Theorems}
\Crefname{theorem}{Theorem}{Theorems}
\crefname{lemma}{Lemma}{Lemmas}
\Crefname{lemma}{Lemma}{Lemmas}
\crefname{proposition}{Proposition}{Propositions}
\Crefname{proposition}{Proposition}{Propositions}
\crefname{corollary}{Corollary}{Corollaries}
\Crefname{corollary}{Corollary}{Corollaries}
\crefname{definition}{Definition}{Definitions}
\Crefname{definition}{Definition}{Definitions}
\crefname{problem}{Problem}{Problems}
\Crefname{problem}{Problem}{Problems}
\crefname{example}{Example}{Examples}
\Crefname{example}{Example}{Examples}
\crefname{remark}{Remark}{Remarks}
\Crefname{remark}{Remark}{Remarks}

\newcommand{\F}{\mathbb F}
\newcommand{\Ftwo}{\mathbb F_2}
\newcommand{\cC}{\mathcal C}
\newcommand{\cF}{\mathcal F}
\newcommand{\cI}{\mathcal I}
\newcommand{\cS}{\mathcal S}
\newcommand{\cT}{\mathcal T}
\newcommand{\cB}{\mathcal B}
\newcommand{\CS}{\operatorname{CS}}

\newcommand{\Ort}{\operatorname{Ort}}

\newcommand{\ind}{\mathbf 1}
\newcommand{\tr}{\mathsf T}
\newcommand{\symdiff}{\mathbin{\triangle}}

\newcolumntype{Y}{>{\raggedright\arraybackslash}X}

\begin{document}

\begin{frontmatter}

\title{A Fano framework for binary delta-matroids}

\renewcommand{\thefootnote}{\fnsymbol{footnote}}

\author{Zhuo Li$^{1}$, Xian'an Jin$^{1,3}$, Qi Yan$^{2}$\footnote{Corresponding author.}\\
        ~\\
        \small $^{1}$School of Mathematical Sciences, Xiamen University, P. R. China\\
		\small $^{2}$School of Mathematics and Statistics, Lanzhou University, P. R. China\\
        \small $^{3}$School of Mathematics and Statistics, Qinghai Minzu University, P. R. China\\
    \small\tt Email:   lzhuo@stu.xmu.edu.cn, yanq@lzu.edu.cn, xajin@xmu.edu.cn}

\begin{abstract}
Dunshee and Ellingham recently showed that seven natural properties
of a cellularly embedded graph form a Fano-plane framework. We
establish an analogous framework for binary delta-matroids. For a
binary delta-matroid $D$ and $\tau$, let
$Z_3(D,\tau)$ denote its associated binary tight $3$-matroid. The
six outer points are represented by evenness or bipartiteness of
$D$ and its global vertex-flip transforms. For the seventh point,
we call $D$ $Z_3$-bipartite when every circuit of $Z_3(D,\tau)$
has even cardinality. We show that the satisfied properties are
precisely the nonzero vectors of a subspace of $\Ftwo^3$. For
ribbon-graphic delta-matroids, $Z_3$-bipartiteness is equivalent to
bipartiteness of the medial graph, so the construction recovers the
Fano-plane framework for embedded graphs.
\end{abstract}

\begin{keyword}
delta-matroid \sep binary delta-matroid \sep multimatroid \sep tight $3$-matroid \sep Fano plane \sep medial graph
\end{keyword}

\end{frontmatter}

\section{Introduction}\label{sec:intro}

Delta-matroids were introduced by Bouchet as a common extension of matroids and several set systems arising in graph theory and topological graph theory \cite{Bouchet1987,Bouchet1989}. Their connection with embedded graphs is particularly fruitful. A ribbon graph $G$ has an associated delta-matroid $D(G)$, and the principal topological operations on $G$ become natural operations on $D(G)$: partial duality corresponds to twist and partial Petrie duality corresponds to loop complementation \cite{Chmutov2009,ChunEtAl2019JCTA,ChunEtAl2019PLMS}. 

Dunshee and Ellingham recently discovered a Fano-plane framework among seven properties of a cellularly embedded graph \cite{DunsheeEllingham2025}. They label the properties by the seven nonzero vectors of $\Ftwo^3$ and prove that the satisfied points are the nonzero vectors of a subspace of $\Ftwo^3$.
In their concluding remarks, Dunshee and Ellingham explicitly proposed extending this framework beyond embedded graphs. In paraphrased form, their question is the following.

\begin{problem}\cite{DunsheeEllingham2025}
\label{prob:dunshee-ellingham}
Extend the Fano framework for graph embeddings to binary, or more generally vf-safe, delta-matroids. In particular, identify an appropriate delta-matroidal counterpart of the medial graph and a property corresponding to medial-graph bipartiteness.
\end{problem}

They observed that twisted duality already has a natural delta-matroidal interpretation, but that no evident counterpart of the medial graph was available. We solve Problem~\ref{prob:dunshee-ellingham} for binary delta-matroids and show by example that vf-safety alone does not suffice for the same Fano-line implications.

For a binary delta-matroid $D$ on $E$, write
\[
D^{*}:=D*E,\qquad D^{+}:=D+E,
\]
and compose successive vertex flips from left to right. 
Yan and Jin obtained an important first relation in this direction: for a binary even delta-matroid $D$, bipartiteness of $D$ is equivalent to evenness of $D^+$ \cite{YanJin2022}. This gives one Fano line once one of its points is assumed. Our purpose is to derive all seven lines simultaneously and to identify the missing seventh point.

We consider the following properties:
\begin{equation}\label{eq:seven-intro}
\begin{array}{c|l}
001 & D\text{ is even},\\
010 & D^{+}\text{ is even},\\
011 & D\text{ is bipartite},\\
100 & D^{*+}\text{ is even},\\
101 & D^{*}\text{ is bipartite},\\
110 & D^{+*}\text{ is bipartite},\\
111 & \text{every circuit of }Z_3(D,\tau)\text{ has even cardinality}.
\end{array}
\end{equation}
We call the last property \emph{$Z_3$-bipartiteness}. 

The proof rests on two circuit-parity characterizations. Fix an ordered transversal triple $(T_1,T_2,T_3)$ of $Z_3(D,\tau)$ compatible with $D$, and for a circuit $C$ put
\[
p_i(C)=|C\cap T_i|\pmod 2.
\]
We prove
\[
D\text{ is even}
\quad\Longleftrightarrow\quad
p_3(C)=0\quad\text{for every circuit }C,
\]
and
\[
D\text{ is bipartite}
\quad\Longleftrightarrow\quad
p_2(C)+p_3(C)=0\quad\text{for every circuit }C.
\]
Permuting the three transversals gives the six outer conditions in \eqref{eq:seven-intro}, while $Z_3$-bipartiteness is the equation $p_1+p_2+p_3=0$. Thus the seven points are precisely the seven nonzero linear parity functionals on $(p_1,p_2,p_3)$, and the Fano structure follows immediately from linearity over $\Ftwo$.

The paper is organized as follows. Section~\ref{sec:prelim} fixes
the notation and records the required results on binary matroids,
delta-matroids, multimatroids, and tight
$3$-matroid. Section~\ref{sec:parity} proves the evenness and
bipartiteness parity criteria. Section~\ref{sec:fano} establishes
the Fano framework and its immediate consequences.
Section~\ref{sec:ribbon-specialization} specializes the theory to
ribbon graphs and identifies $Z_3$-bipartiteness with medial-graph
bipartiteness. Section~\ref{sec:vfsafe-counterexample} gives a
counterexample showing that Theorem~\ref{thm:fano-framework} does not hold for vf-safe delta-matroids.

\section{Preliminaries}\label{sec:prelim}
All sets and combinatorial structures in this paper are finite.
We write $\symdiff$ for symmetric difference and work over
$\Ftwo$ whenever a field is not explicitly specified. We use
standard matroid terminology without further comment; see
\cite{Oxley2011, Brijder2018}. The definitions and results on delta-matroids
and multimatroids needed below are recalled explicitly.

\subsection{Binary matroids and cycle spaces}\label{subsec:matroids}

A \emph{set system} is a pair $S=(E,\cF)$, where $E$ is a finite set
and $\cF\subseteq 2^E$. The members of $\cF$ are called
\emph{feasible sets}. A set system is called \emph{proper} if
$\cF\neq\varnothing$.
For $X\subseteq E$, define
\[
\cF|X:=\{F\in\cF:F\subseteq X\}, \qquad
S|X:=(X,\cF|X)
\]
and call $S|X$ the \emph{restriction} of $S$ to $X$.

A \emph{matroid} is a proper set system $M=(E,\cB)$ satisfying the
basis exchange axiom: for all $B_1,B_2\in\cB$ and every
$e\in B_1\setminus B_2$, there exists $f\in B_2\setminus B_1$ such that
\[
    B_1\symdiff\{e,f\}\in\cB.
\]
The members of $\cB$ are the \emph{bases} of $M$. The family
\[
    \cI(M)=\{I\subseteq E : I\subseteq B
    \text{ for some } B\in\cB\}
\]
is the family of \emph{independent sets} of $M$.
A set that is not independent is called \emph{dependent}.
A \emph{circuit} of $M$ is a minimal dependent set, and the family
of circuits of $M$ is denoted by $\cC(M)$.
If $B$ is a basis and $e\in E\setminus B$, then $B\cup\{e\}$ contains a unique circuit, denoted $C(e,B)$ and called the \emph{fundamental circuit} of $e$ with respect to $B$.

A matroid $M_R$ on $E$ is \emph{representable over a field $\F$} if
there exists a matrix $R$ over $\F$, whose columns are indexed by $E$,
such that a subset $X\subseteq E$ is independent in $M_R$ if and only if
the columns of $R$ indexed by $X$ are linearly independent over $\F$.
A matroid is \emph{binary} if it is representable over $\Ftwo$.

For a finite set $E$ and a subset $X\subseteq E$, let $\chi_X\in\Ftwo^E$
denote the incidence vector of $X$, defined by
\[
(\chi_X)_e=
\begin{cases}
1, & e\in X,\\
0, & e\notin X.
\end{cases}
\]
We regard incidence vectors as row vectors unless otherwise specified.
Thus, for $X,Y\subseteq E$,
\[
\chi_{X\symdiff Y}=\chi_X+\chi_Y
\]
over $\Ftwo$, and
\[
\chi_X\chi_Y^{\tr}
=
|X\cap Y|\pmod 2.
\]

Let $M$ be a binary matroid on $E$. The \emph{cycle space} $\CS(M)$ of $M$ is the span of the circuits of $M$ under symmetric difference. Each element of $\CS(M)$ is called a cycle. By the standard characterization of binary matroids \cite[Theorem~9.1.2]{Oxley2011}, every cycle can also be expressed
as a disjoint union of circuits. By convention, $\varnothing$ is
the union of zero circuits.
\begin{proposition}
    \label{even}
   Let $M$ be a binary matroid and let $B$ be a basis of $M$.
Then every cycle of $M$ has even cardinality if and only if every
fundamental circuit with respect to $B$ has even cardinality.
\end{proposition}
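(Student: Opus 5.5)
The forward direction is immediate: each fundamental circuit $C(e,B)$ is a circuit and hence a cycle. So if every cycle is even, every fundamental circuit is even. The work lies in the converse. The plan is to show that the fundamental circuits with respect to $B$ span $\CS(M)$. Then parity of cardinality, being additive under symmetric difference, passes from the generators to every cycle.

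For additivity, observe that $|X\symdiff Y| = |X|+|Y|-2|X\cap Y|$. Equivalently, the map $X\mapsto |X| \pmod 2$ is the linear functional $\chi_X\mapsto \chi_X\chi_E^{\tr}$ on $\Ftwo^E$. So the set of even subsets of $E$ is a subspace, namely the kernel of this functional. If every fundamental circuit lies in this subspace, so does their span.

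The main step is showing that every cycle $Z\in\CS(M)$ satisfies
\[
Z=\mathop{\triangle}_{e\in Z\setminus B} C(e,B).
\]
Let $Z' := Z \symdiff \mathop{\triangle}_{e\in Z\setminus B} C(e,B)$.
\begin{itemize}
\item $Z'$ is a cycle, since $\CS(M)$ is closed under symmetric difference and each $C(e,B)$ is a circuit.
\item $Z'\subseteq B$. Each $e\in E\setminus B$ lies in $C(f,B)$ only when $f=e$, so the elements outside $B$ cancel exactly.
\item By the characterization quoted above from \cite[Theorem~9.1.2]{Oxley2011}, $Z'$ is a disjoint union of circuits. Any nonempty circuit contained in $Z'$ would be a dependent subset of the independent set $B$, which is impossible. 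Hence $Z'=\varnothing$.
\end{itemize}
This gives the displayed identity, so every cycle is a sum of fundamental circuits. Every cycle is therefore even once all fundamental circuits are.

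I expect no real obstacle. The one point needing care is the claim that a nonempty cycle contains a circuit. That is exactly where the disjoint-union characterization of cycles in binary matroids is used, and I will cite it rather than reprove it.
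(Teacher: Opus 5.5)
Your proposal is correct and follows essentially the same route as the paper. Both arguments show that $X\symdiff\bigtriangleup_{e\in X\setminus B}C(e,B)$ is a cycle contained in the independent set $B$, hence empty, and then transfer parity from the fundamental circuits to every cycle by additivity under symmetric difference (the forward direction being immediate in both).
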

\begin{proof}
Let $X\in\CS(M)$. Since
\[
C(e,B)\cap(E\setminus B)=\{e\}
\qquad
\text{for every }e\in E\setminus B,
\]
the cycle
\[
X'
=
X\symdiff
\bigtriangleup_{e\in X\setminus B}C(e,B)
\]
is contained in $B$. As $B$ is independent and every nonempty
cycle contains a circuit, we have $X'=\varnothing$. Hence
\[
X=
\bigtriangleup_{e\in X\setminus B}C(e,B).
\]

Therefore, if every fundamental circuit with respect to $B$ has
even cardinality, then every cycle has even cardinality, since
cardinality modulo $2$ is additive under symmetric difference.
The converse is immediate, since every fundamental circuit is a
cycle.
\end{proof}

A matroid is called \emph{bipartite} if every circuit has even
cardinality, in analogy with the characterization of bipartite
graphs by the parity of their cycles.

\subsection{Delta-matroids and vertex flips}\label{subsec:delta}

\begin{definition}[\cite{Bouchet1987}]
A \emph{delta-matroid} is a proper set system $D=(E,\cF)$ satisfying the symmetric exchange axiom: for all $X,Y\in\cF$ and every $u\in X\symdiff Y$, there exists $v\in X\symdiff Y$, possibly $v=u$, such that
\[
X\symdiff\{u,v\}\in\cF.
\]
\end{definition}
 
Let $\cF_{\min}(D)$ and $\cF_{\max}(D)$ be the families of feasible sets of minimum and maximum cardinality. Then
\[
D_{\min}:=(E,\cF_{\min}(D)),
\qquad
D_{\max}:=(E,\cF_{\max}(D))
\]
are matroids, called the \emph{lower} and \emph{upper matroids}, respectively \cite{Bouchet1987}. $D$ is \emph{bipartite} if $D_{\min}$ is a bipartite matroid \cite{YanJin2022}. A delta-matroid $D$ is \emph{even} if all its feasible sets have the same parity
and is \emph{normal} if $\varnothing\in\cF$.

For $X\subseteq E$, the \emph{twist} of $D$ by $X$ is
\[
D*X=\bigl(E,\{F\symdiff X:F\in\cF\}\bigr).
\]
For $e\in E$, the \emph{loop complementation} of $D$ at $e$ is $D+e=(E,\cF')$, where
\[
\cF'=\cF\symdiff\{F\cup\{e\}:F\in\cF,\ e\notin F\}.
\]
Loop complementations on distinct elements commute; hence the
result of applying $+e$ for all $e\in X$ is independent of the
order, and we denote it by $D+X$. The elementary operations
$*e$ and $+e$ are called \emph{vertex flips}; more generally, we
use this term for compositions of such operations
\cite{BrijderHoogeboom2013,BrijderHoogeboom2014}.

We write
\[
D^{*}:=D*E,
\qquad
D^{+}:=D+E,
\]
and use left-associative notation for successive vertex flips:
\[
D^{+*}:=(D+E)*E,
\qquad
D^{*+}:=(D*E)+E.
\]
For a fixed $e$, the operations $*e$ and $+e$ are involutions and generate a group isomorphic to $S_3$ \cite{BrijderHoogeboom2013,BrijderHoogeboom2014}. A delta-matroid is \emph{vf-safe} if every set system obtained
from it by any sequence of vertex flips is again a delta-matroid.

Let $A$ be a symmetric matrix over $\Ftwo$, with rows and columns indexed by $E$. For $X\subseteq E$, let $A[X]$ be its principal submatrix, and regard the empty matrix as nonsingular. Define
\[
D(A):=\bigl(E,\{X\subseteq E:A[X]\text{ is nonsingular}\}\bigr).
\]
The set system $D(A)$ is a delta-matroid \cite{BouchetDuchamp1991}. A delta-matroid $D$ is \emph{binary} if some twist of $D$ is isomorphic to $D(A)$ for a symmetric matrix $A$ over $\Ftwo$. Binary delta-matroids are vf-safe \cite{BrijderHoogeboom2013}.

\begin{lemma}[\cite{BouchetDuchamp1991}]\label{lem:normal-binary-rep}
Let $D$ be a binary delta-matroid and let $F$ be a feasible set. Then $D*F$ is normal, and there is a symmetric matrix $A$ over $\Ftwo$ such that
\[
D*F=D(A).
\]
\end{lemma}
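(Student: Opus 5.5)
Here the definition of ``binary'' only gives some twist $D*X\cong D(A)$. Since the statement is about $D$ on its own ground set $E$, I will read $\cong$ as equality of set systems on $E$, with the row and column indices of $A$ relabelled by the identification; I will use this relabelling tacitly below. So there are $X\subseteq E$ and a symmetric $A_0$ over $\Ftwo$ with $D*X=D(A_0)$. The statement concerns an arbitrary feasible $F$, so the plan has three steps: show that $D*F$ is normal, show that $D*F$ is again of the form $D(A)$, and reduce the second step to a pivoting identity.

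Normality is immediate. Since $F\in\cF$, we have $\varnothing=F\symdiff F\in\cF(D*F)$.

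For the representation, write $D*F=(D*X)*(X\symdiff F)=D(A_0)*Y$ with $Y:=X\symdiff F$. Because $F$ is feasible in $D$, the set $F\symdiff X=Y$ is feasible in $D*X=D(A_0)$. Hence $A_0[Y]$ is nonsingular. It therefore suffices to prove the following claim.

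\emph{Claim.} If $A$ is symmetric over $\Ftwo$ and $A[Y]$ is nonsingular, then $D(A)*Y=D(A*Y)$. Here $A*Y$ is the principal pivot transform of $A$ on $Y$. Writing $A=\begin{pmatrix}P&Q\\ Q^{\tr}&R\end{pmatrix}$ with $P=A[Y]$, it is
\[
A*Y=\begin{pmatrix}P^{-1}&P^{-1}Q\\ Q^{\tr}P^{-1}&R-Q^{\tr}P^{-1}Q\end{pmatrix},
\]
which over $\Ftwo$ is again symmetric.

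The main step is Tucker's principal pivot identity: for every $Z\subseteq E$,
\[
\det (A*Y)[Z]=\det A[Z\symdiff Y]/\det A[Y].
\]
Over $\Ftwo$ this says exactly that $(A*Y)[Z]$ is nonsingular if and only if $A[Z\symdiff Y]$ is nonsingular. That is precisely the claim.

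To prove the identity I would use the standard graph argument. Consider the column space of $\begin{pmatrix}I\\ A\end{pmatrix}$, i.e.\ the subspace $\{(x,Ax)\}\subseteq\Ftwo^{E}\times\Ftwo^{E}$. Swapping the coordinates $x_e$ and $(Ax)_e$ for every $e\in Y$ produces a subspace of the same shape, $\{(x',A'x')\}$, and its matrix $A'$ is exactly $A*Y$. Nonsingularity of a principal submatrix indexed by $Z$ corresponds to a certain set of $|E|$ coordinates forming a basis, namely the $(Ax)$-coordinates on $Z$ together with the $x$-coordinates off $Z$. The coordinate swap on $Y$ turns this coordinate set for $Z$ into the coordinate set for $Z\symdiff Y$, which proves the equivalence. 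Alternatively, I could simply cite Tucker's theorem, or the corresponding statement in Bouchet--Duchamp, as the paper does.

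The expected obstacle is this pivot identity. In the write-up I would either cite it or give the block computation above in a few lines using a Schur complement.
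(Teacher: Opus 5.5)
Your proof is correct. Normality follows from $F\symdiff F=\varnothing$, you correctly reduce to $D*F=D(A_0)*Y$ with $A_0[Y]$ nonsingular, and the principal-pivot identity then gives $D*F=D(A_0*Y)$ with $A_0*Y$ symmetric over $\Ftwo$. Your coordinate-swap proof of that identity is sound, and the sign of the off-diagonal block is irrelevant over $\Ftwo$. The paper gives no proof of its own and simply cites Bouchet--Duchamp, where the result rests on the same correspondence between twisting by a feasible set and principal pivoting, so your route is the standard one.
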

\subsection{Multimatroids, shelter and cycles}\label{subsec:multi}
We recall only the terminology needed here; see \cite{Bouchet1997,Bouchet2001,BrijderHoogeboom2014,Brijder2018} for the general theory.

A \emph{carrier} is a pair $(U,\Omega)$, where $\Omega$ is a partition of $U$. The members of $\Omega$ are \emph{skew classes}; a $2$-element subset of a skew class is a \emph{skew pair}. A set $S\subseteq U$ is a \emph{subtransversal} if $|S\cap\omega|\leq1$ for every $\omega\in\Omega$, and a \emph{transversal} if equality holds for every skew class. The corresponding families are denoted by $\cS(\Omega)$ and $\cT(\Omega)$.

\begin{definition}[\cite{Bouchet1997}]
\normalfont
A \emph{multimatroid} $Z$ (described by its independent sets) is a
triple $(U,\Omega,\mathcal I(Z))$, where $(U,\Omega)$ is a carrier
and $\mathcal I(Z)\subseteq\mathcal S(\Omega)$ satisfies:
\begin{itemize}
\item[(1)] for each transversal $T\in\mathcal T(\Omega)$,
$(T,\mathcal I(Z)\cap 2^T)$ is a matroid described by its
independent sets;
\item[(2)] for every $I\in\mathcal I(Z)$ and every skew pair
$p=\{x,y\}$ contained in a skew class $\omega$ disjoint from $I$,
at least one of $I\cup\{x\}$ and $I\cup\{y\}$ belongs to
$\mathcal I(Z)$.
\end{itemize}
\end{definition}

For $X\subseteq U$, the \emph{restriction} of $Z$ to $X$, denoted
by $Z[X]$, is the multimatroid
\[
\bigl(X,\Omega_X,\mathcal I(Z)\cap2^X\bigr),
\qquad
\Omega_X:=\{\omega\cap X:\omega\in\Omega,\ \omega\cap X\neq\varnothing\}.
\]
We write
\[
Z-X:=Z[U\setminus X]
\]
for deletion of $X$.

Each \( I \in \mathcal{I} \) of \( Z \) is referred to as an \emph{independent set} of \( Z \). 
The set of maximal independent sets \( \max(\mathcal{I}) \) of \( Z \) (with respect to inclusion) 
form the set of \emph{bases}, denoted by \( \mathcal{B}(Z) \). 
A subtransversal is \emph{dependent} if it is not independent.
A \emph{circuit} of $Z$ is an minimal dependent
subtransversal, and the family of circuits is denoted by
$\cC(Z)$. If $C\subseteq T$ for a transversal $T$, then
\(C\in\cC(Z)\) if and only if \(C\in\cC(Z[T]).\)

If every skew class of a multimatroid \( Z \) has cardinality $q$, then $Z$ is a \emph{$q$-matroid}.

\begin{definition}[\cite{Bouchet1997}]
    For $q\geq 2$, a $q$-matroid $Z$  is called \emph{tight} if for every basis \( X \in \mathcal{B}(Z) \) and every skew class \( \omega \in \Omega \), exactly one of the transversals \( (X \setminus \omega) \cup \{u\} \) for \( u \in \omega \) is not a basis of \( Z \).
\end{definition}

Let $M$ be a matroid on $U$. We say that $M$ \emph{shelters} $Z$ if for every \(T\in\cT(\Omega)\),
\(Z[T]=M|T.\) Equivalently, $M$ and $Z$ have the same independent subtransversals. The multimatroid $Z$ is \emph{binary} if it is sheltered by a binary matroid \cite{BrijderTraldi2016,Brijder2018}.

Following Brijder \cite[Section~6.1]{Brijder2018}, define the
family of cycles of a binary multimatroid $Z$ on $(U,\Omega)$ by
\[
\CS(Z):=\bigcup_{T\in\cT(\Omega)}\CS(Z[T]),
\]
where $\CS(Z[T])$ denotes the family of cycles of the matroid
$Z[T]$.

The following observation follows directly from the definition of
sheltering and the cycle space description above.

\begin{proposition}
\label{prop:shelter-cycles}
Let $M$ be a binary matroid that shelters a multimatroid
$Z$ on $(U,\Omega)$. Then
\begin{align}
\cC(Z)
&=
\cC(M)\cap\cS(\Omega),
\label{eq:circuit-shelter}\\
\CS(Z)
&=
\CS(M)\cap\cS(\Omega).
\label{eq:cycle-shelter}
\end{align}
In particular, every cycle of $Z$ is a disjoint union of circuits
of $Z$.
\end{proposition}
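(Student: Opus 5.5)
The plan is to prove the two displayed identities directly from the definitions, reducing everything to a single transversal at a time. Fix a subtransversal $X\in\cS(\Omega)$ and choose a transversal $T\supseteq X$; one exists because we may pick one element from each skew class not met by $X$. By definition of sheltering, $Z[T]=M|T$. Hence $X$ is independent in $Z$ if and only if it is independent in $Z[T]$, if and only if it is independent in $M|T$, if and only if it is independent in $M$. So $M$ and $Z$ have the same independent subtransversals, and therefore the same dependent subtransversals.

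For \eqref{eq:circuit-shelter}, let $C\in\cC(Z)$. Then $C$ is a dependent subtransversal, hence dependent in $M$. Every proper subset of $C$ is again a subtransversal and is independent in $Z$, hence independent in $M$. So $C$ is a minimal dependent set of $M$, i.e.\ $C\in\cC(M)\cap\cS(\Omega)$. Conversely, if $C\in\cC(M)$ is a subtransversal, then $C$ is dependent in $Z$, and all its proper subsets are independent in $M$, hence in $Z$. So $C\in\cC(Z)$.

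For \eqref{eq:cycle-shelter}, I first note that for each transversal $T$ the matroid $Z[T]=M|T$ is binary. Its circuits are exactly the circuits of $M$ contained in $T$, so $\CS(Z[T])$ consists of the disjoint unions of circuits of $M$ lying in $T$.

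The forward inclusion: if $X\in\CS(Z[T])$, then $X$ is a disjoint union of circuits of $M$, hence $X\in\CS(M)$, and $X\subseteq T$ is a subtransversal.

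The reverse inclusion is the only step needing care. Take $X\in\CS(M)$ that is a subtransversal. By the characterization of binary matroids quoted earlier, $X$ is a disjoint union of circuits $C_1,\dots,C_k$ of $M$. Extend $X$ to a transversal $T$. Each $C_i\subseteq X\subseteq T$ is a circuit of $M|T=Z[T]$, so $X$, as their symmetric difference, lies in $\CS(Z[T])\subseteq\CS(Z)$. The empty set is handled by the convention $k=0$.

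Finally, the ``in particular'' clause follows at once. Any cycle $X$ of $Z$ lies in $\CS(Z[T])$ for some $T$, so it is a disjoint union of circuits of the binary matroid $Z[T]$. Each of these is a circuit of $Z$, since it is contained in $T$ (using the remark that circuits of $Z$ inside $T$ coincide with circuits of $Z[T]$). I expect no real obstacle. The only point to watch is using the disjoint-union form of cycles rather than arbitrary symmetric differences, so that every circuit involved stays inside the chosen transversal.
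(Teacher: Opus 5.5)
Your proposal is correct and follows essentially the same route as the paper. Both arguments reduce everything to a single transversal $T\supseteq X$ via $Z[T]=M|T$. For the reverse cycle inclusion, both write $X$ as a disjoint union of circuits of $M$, each of which lies in $T$ and is therefore a circuit of $Z[T]$. You justify the circuit identity through the coincidence of independent subtransversals more explicitly than the paper, which simply says it "follows immediately", but the argument is the same.
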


\begin{proof}
For every transversal $T$, sheltering gives $Z[T]=M|T$, and hence
\[
\cC(Z[T])=\cC(M|T),
\qquad
\CS(Z[T])=\CS(M|T).
\]
Every subtransversal is contained in a transversal. It follows immediately that the circuits of $Z$ are precisely the subtransversal circuits of $M$, proving \eqref{eq:circuit-shelter}.

If $X\in\CS(Z)$, then $X\in\CS(M|T)$ for some transversal $T$, so $X\in\CS(M)\cap\cS(\Omega)$. Conversely, let $X\in\CS(M)\cap\cS(\Omega)$ and choose a transversal $T$ containing $X$. Write $X$ as a disjoint union of circuits of $M$. Every one of these circuits is contained in $T$ and is therefore a circuit of $M|T=Z[T]$. Thus $X\in\CS(Z[T])\subseteq\CS(Z)$, proving \eqref{eq:cycle-shelter} and the final assertion.
\end{proof}

\subsection{Tight $3$-matroid $Z_3(D,\tau)$}\label{subsec:z3}

Let $E$ be a finite set, and let $Z$ be a tight $3$-matroid with
skew classes
\[
\omega_e=\{e_1,e_2,e_3\},
\qquad e\in E.
\]
For $i\in\{1,2,3\}$, put
\[
T_i:=\{e_i:e\in E\},
\qquad
\tau:=(T_1,T_2,T_3).
\]
For $X\subseteq E$, write
\[
T_i(X):=\{e_i:e\in X\}.
\]
Define a set system $D_{Z,\tau}$ on $E$ by declaring
$X\subseteq E$ feasible if and only if
\[
T_1(E\setminus X)\cup T_2(X)
\]
is a basis of $Z-T_3$. Equivalently, this transversal is
\[
\{e_1:e\in E\setminus X\}
\cup
\{e_2:e\in X\}.
\]
Note that $D_{Z,\tau}$ is a vf-safe delta-matroid \cite{BrijderHoogeboom2014}.

\begin{lemma}[\cite{Bouchet1997}]
\label{lem:tight-even}
Let $Z$ be a tight $3$-matroid. Then
\(Z-T_3\) is tight if and only if \(D_{Z,\tau}\) is even.
\end{lemma}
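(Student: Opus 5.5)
We prove both directions by working with the delta-matroid $D:=D_{Z,\tau}$, whose feasible sets correspond exactly to the bases of $Z-T_3$; this is a $2$-matroid on the skew classes $\{e_1,e_2\}$. Tightness of $Z-T_3$ is a condition on its bases: for each basis $X$ of $Z-T_3$ and each skew class, exactly one of the two transversals $(X\setminus\{e_1,e_2\})\cup\{e_1\}$ and $(X\setminus\{e_1,e_2\})\cup\{e_2\}$ fails to be a basis. Bases of $Z-T_3$ are transversals of the $2$-element skew classes, since tight multimatroids (and their minors obtained by deleting a whole transversal) are nondegenerate and have full-size bases.

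The first step translates this into the language of $D$. Under $X\mapsto T_1(E\setminus X)\cup T_2(X)$, swapping $e_1$ and $e_2$ in a transversal corresponds to $F\mapsto F\symdiff\{e\}$. So $Z-T_3$ is tight if and only if, for every feasible $F$ and every $e\in E$, the set $F\symdiff\{e\}$ is infeasible. The "exactly one" clause is automatic here: $F$ itself is feasible, so at most one of the pair can fail, and the condition reduces to $F\symdiff\{e\}\notin\cF$.

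It remains to show that a delta-matroid has no feasible pair at symmetric-difference distance $1$ if and only if it is even. If $D$ is even, $|F|$ and $|F\symdiff\{e\}|$ have different parity, so $F\symdiff\{e\}$ cannot be feasible. Conversely, suppose no such pair exists and take feasible $X\neq Y$ with $|X\symdiff Y|$ odd and minimal. Choose $u\in X\symdiff Y$. The symmetric exchange axiom gives $v$ with $X\symdiff\{u,v\}$ feasible, and $v\neq u$ by hypothesis. Then $X':=X\symdiff\{u,v\}$ has the same parity as $X$, and $|X'\symdiff Y|=|X\symdiff Y|-2$. Iterating this drives the distance down to $1$, which is a contradiction. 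This is Bouchet's standard argument.

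The one point that needs care is the claim that bases of $Z-T_3$ are full transversals, so that tightness is tested only on transversals. I would justify it by recalling that $D_{Z,\tau}$ is a (proper) delta-matroid, as noted above: its feasible sets are nonempty and each yields a basis of $Z-T_3$ of size $|E|$. Together with the multimatroid axiom (2), this shows all bases of $Z-T_3$ have size $|E|$, and the equivalence follows.
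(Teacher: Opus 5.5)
Your proof is correct. The paper gives no argument of its own for this lemma; it only cites Bouchet. Your reduction is the standard argument behind that citation: tightness of the $2$-matroid $Z-T_3$ says exactly that no feasible $F$ of $D_{Z,\tau}$ has $F\symdiff\{e\}$ feasible, and by symmetric exchange this is equivalent to evenness.

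Two minor remarks. First, tightness of $Z$ itself is never used; the equivalence holds for the delta-matroid attached to any $2$-matroid. Second, the fact that bases of $Z-T_3$ are full transversals follows from axiom (2) alone, since $\varnothing$ is independent and every skew class of $Z-T_3$ has two elements. The appeal to properness of $D_{Z,\tau}$ in your last paragraph is therefore unnecessary.
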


Conversely, let $D=(E,\cF)$ be a vf-safe delta-matroid. For each
$e\in E$, let
\[
\omega_e=\{e_1,e_2,e_3\},
\]
and put
\[
U:=\bigcup_{e\in E}\omega_e,
\qquad
\Omega:=\{\omega_e:e\in E\}.
\]
For $i\in\{1,2,3\}$, let
\[
T_i:=\{e_i:e\in E\},
\qquad
\tau:=(T_1,T_2,T_3).
\]
Brijder and Hoogeboom \cite{BrijderHoogeboom2014} showed that
there exists a unique tight $3$-matroid $Z$ on $(U,\Omega)$ such
that $D=D_{Z,\tau}.$
We denote this tight $3$-matroid by $Z_3(D,\tau).$

For $e\in E$, with $e_i\in T_i$, define
\[
\tau*e=
\bigl((T_1\setminus\{e_1\})\cup\{e_2\},
      (T_2\setminus\{e_2\})\cup\{e_1\},
      T_3\bigr)
\]
and
\[
\tau+e=
\bigl(T_1,
      (T_2\setminus\{e_2\})\cup\{e_3\},
      (T_3\setminus\{e_3\})\cup\{e_2\}\bigr).
\]
Thus $*e$ interchanges the first two states in $\omega_e$, while $+e$ interchanges the second and third.

\begin{lemma}[\cite{BrijderHoogeboom2014}]
\label{lem:BH-correspondence}
Let $D=(E,\cF)$ be vf-safe delta-matroid and let $Z:=Z_3(D,\tau).$ Then for every $e\in E$,
\[
D*e=D_{Z,\tau*e},
\qquad
D+e=D_{Z,\tau+e}.
\]
\end{lemma}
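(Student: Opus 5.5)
The plan is to unwind the definition of $D_{Z,\tau'}$ for $\tau'=\tau*e$ and $\tau'=\tau+e$. At every element $f\neq e$ the three transversals of $\tau'$ agree with those of $\tau$, so only the skew class $\omega_e$ needs to be examined. Throughout I use that a transversal of $Z-T_3'$ is a basis of $Z-T_3'$ exactly when it is independent in $Z$.

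\textbf{The twist.} Put $\tau*e=(T_1',T_2',T_3)$. The third transversal is unchanged, so $Z-T_3$ is the same multimatroid as before. For $X\subseteq E$, the transversal $T_1'(E\setminus X)\cup T_2'(X)$ has the following element in $\omega_e$:
\begin{itemize}
\item it contains $e_2$ if $e\notin X$, and $e_1$ if $e\in X$;
\item elsewhere it agrees with the transversal for $X$.
\end{itemize}
Hence it equals $T_1(E\setminus(X\symdiff\{e\}))\cup T_2(X\symdiff\{e\})$. So $X$ is feasible in $D_{Z,\tau*e}$ if and only if $X\symdiff\{e\}\in\cF$, that is, if and only if $X$ is feasible in $D*e$. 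This case is a direct check.

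\textbf{Loop complementation.} Put $\tau+e=(T_1,T_2',T_3')$, where $T_3'$ contains $e_2$.
\begin{itemize}
\item If $e\notin X$, the transversal attached to $X$ is identical for $\tau$ and $\tau+e$ (it uses $e_1$). In this case $X$ is feasible in $D+e$ if and only if $X\in\cF$, so the two sides agree.
\item If $e\in X$, let $R$ be the common part of the transversals on $E\setminus\{e\}$, namely $T_1(E\setminus X)\cup T_2(X\setminus\{e\})$. Then:
\begin{itemize}
\item $X\in\cF$ if and only if $R\cup\{e_2\}$ is independent;
\item $X\setminus\{e\}\in\cF$ if and only if $R\cup\{e_1\}$ is independent;
\item $X$ is feasible in $D_{Z,\tau+e}$ if and only if $R\cup\{e_3\}$ is independent.
\end{itemize}
By the definition of $+e$, the set $X$ is feasible in $D+e$ if and only if exactly one of $X$ and $X\setminus\{e\}$ lies in $\cF$.
\end{itemize}
So the whole claim reduces to one parity statement: among the three transversals $R\cup\{e_i\}$, $i=1,2,3$, the number that are bases of $Z$ is $0$ or $2$. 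Granting this, $R\cup\{e_3\}$ is a basis if and only if exactly one of $R\cup\{e_1\}$ and $R\cup\{e_2\}$ is.

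\textbf{The parity statement.} This comes from tightness. If some $R\cup\{e_i\}$ is a basis, apply the tightness definition to that basis and to $\omega_e$. Exactly one of the three transversals obtained by varying the element of $\omega_e$ fails to be a basis, so exactly two are bases. Otherwise none is a basis.

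\textbf{Main obstacle.} The delicate point is to make sure that "basis of $Z$" and "independent transversal" coincide. Tightness is phrased for bases, while $D_{Z,\tau}$ is phrased through transversals. I would argue as follows:
\begin{itemize}
\item $D$ is proper, so some transversal of $Z-T_3$ is independent.
\item All bases of a multimatroid have the same cardinality (a standard fact from Bouchet's theory, used together with axiom (2) to extend independent subtransversals). Hence every basis of $Z$ is a transversal.
\item Consequently a transversal is a basis of $Z$ if and only if it is independent, and by restriction the same holds in $Z-T_3$ and $Z-T_3'$.
\end{itemize}
With this identification in place, the tightness count above goes through. Both equalities then hold for the unique tight $3$-matroid $Z=Z_3(D,\tau)$ with $D=D_{Z,\tau}$.
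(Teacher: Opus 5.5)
Your argument is correct. The paper gives no proof of this lemma; it imports it from Brijder and Hoogeboom, and your proof is the standard argument behind that result.

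\begin{itemize}
\item For $*e$, the triple $\tau*e$ only swaps the states $e_1,e_2$ in $\omega_e$ and leaves $Z-T_3$ unchanged. So the transversal attached to $X$ is the one attached to $X\symdiff\{e\}$.
\item For $+e$, the case $e\notin X$ is immediate. The case $e\in X$ reduces to the fact that in a tight $3$-matroid, the number of bases among $R\cup\{e_1\},R\cup\{e_2\},R\cup\{e_3\}$ is $0$ or $2$.
\end{itemize}

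One point to tidy up concerns the identification of bases with independent transversals. You do not need properness of $D$ for this. You also do not need the claim that all bases of a multimatroid have the same cardinality, which can fail once singleton skew classes are allowed. The direct argument is:
\begin{itemize}
\item Every skew class of $Z$, $Z-T_3$ and $Z-T_3'$ has at least two elements.
\item So axiom~(2) extends any independent set that misses some class, and hence maximal independent sets are transversals.
\item Conversely, an independent transversal is trivially maximal among subtransversals.
\end{itemize}
This is all your argument uses.
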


The operations on distinct skew classes commute, so $\tau*X$ and $\tau+X$ are defined for $X\subseteq E$. On the ground set $E$,
\begin{equation}\label{eq:global-triple-actions}
(T_1,T_2,T_3)\xrightarrow{*E}(T_2,T_1,T_3),
\qquad
(T_1,T_2,T_3)\xrightarrow{+E}(T_1,T_3,T_2).
\end{equation}
Consequently,
\begin{equation}\label{eq:composed-triple-actions}
(T_1,T_2,T_3)\xrightarrow{+E\,*E}(T_3,T_1,T_2),
\qquad
(T_1,T_2,T_3)\xrightarrow{*E\,+E}(T_2,T_3,T_1).
\end{equation}

The following lemma will be used in the proof of the characterization of bipartiteness. For any $X\subseteq E$, let $I_X$ denote the identity matrix indexed by $X$ and $\ind_X$ denote the all-ones row vector indexed by $X$.

\begin{lemma}[{\cite{BrijderTraldi2016II}}]
\label{lem:compatible-shelter}
Let $A$ be a symmetric matrix over $\Ftwo$, with rows and columns
indexed by $E$, and let
\[
D=D(A),\qquad Z=Z_3(D,\tau),
\]
where $\tau=(T_1,T_2,T_3)$. Then $Z$ is sheltered by the binary
matroid $M_R$ represented by
\[
R=
\bordermatrix{
    & T_1 & T_2 & T_3 \cr
    & I_E   & A   & A+I_E
}.
\]

\end{lemma}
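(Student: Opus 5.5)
The plan is to use the uniqueness part of the Brijder--Hoogeboom correspondence. There is exactly one tight $3$-matroid $Z$ on $(U,\Omega)$ with $D=D_{Z,\tau}$. So it suffices to check three things about the set system $Z'$ on $(U,\Omega)$ whose independent sets are the subtransversals independent in $M_R$:
\begin{itemize}
\item[(a)] $Z'$ is a multimatroid;
\item[(b)] $Z'$ is tight;
\item[(c)] $D_{Z',\tau}=D(A)$.
\end{itemize}
Then $Z'=Z_3(D,\tau)$. By construction $Z'$ is sheltered by $M_R$, which is exactly the claim.

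The basic computation is linear algebra on transversals. A transversal $T$ is determined by a partition $E=W\sqcup X\sqcup Y$, where $e\in W,X,Y$ means $e_1,e_2,e_3\in T$, respectively. The columns of $R$ on $T$ are the unit columns indexed by $W$, the columns of $A$ indexed by $X$, and the columns of $A+I_E$ indexed by $Y$. Clearing the unit columns by row reduction shows that $T$ is a basis of $M_R$ iff $(A+I_Y)[X\cup Y]$ is nonsingular, where $I_Y$ is the diagonal indicator of $Y$. Taking $Y=\varnothing$, the transversal $T_1(E\setminus X)\cup T_2(X)$ is a basis iff $A[X]$ is nonsingular. This is (c), provided bases of $Z'-T_3$ are transversals of the restricted carrier, which I address below.

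The key identity for (a) and (b) is that for each $e$ the three columns indexed by $e_1,e_2,e_3$ sum to zero: the unit column of $e$ plus column $e$ of $A$ plus column $e$ of $A+I_E$ vanishes.

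For tightness, fix a transversal $T$ that is a basis and a class $\omega_e$. Consider the three transversals agreeing with $T$ off $\omega_e$. By multilinearity of the determinant in the $e$-column, their determinants sum to $0$ over $\Ftwo$. Hence exactly two of them are nonsingular, one of which is $T$, so exactly one is not a basis.

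Axiom (1) of a multimatroid is automatic, since each $Z'[T]$ is a restriction $M_R|T$ of a matroid. The real content is axiom (2), together with the fact that maximal independent subtransversals are transversals; tightness and (c) use this fact.

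I expect axiom (2) to be the main obstacle. Let $I$ be independent, let $\omega_e$ be disjoint from $I$, and let $x,y\in\omega_e$ be distinct. If $I\cup\{x\}$ and $I\cup\{y\}$ were both dependent, then $x,y\in\operatorname{span}(I)$. The zero-sum identity then puts the whole class $\omega_e$, in particular the unit vector of $e$, in $\operatorname{span}(I)$.

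I would first prove that every independent subtransversal extends to a basis transversal. Given $I$, extend it greedily class by class: for a class $\omega_f$ not meeting $I$, the three candidate columns sum to $0$ and span a space containing a nonzero vector outside $\operatorname{span}(I)$ unless the unit vector $\varepsilon_f$ lies in $\operatorname{span}(I)$. A counting argument then shows that $\operatorname{span}(I)$ cannot contain $\varepsilon_f$ for every uncovered $f$ together with all three columns of those classes. This rests on the fact that the unit vectors of covered and uncovered classes together span $\Ftwo^E$ while $\dim\operatorname{span}(I)=|I|<|E|$.

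Once extension to basis transversals is established, axiom (2) follows from the determinant-sum argument applied at the class $\omega_e$ inside a basis transversal containing $I$: of the three choices at $\omega_e$, two give bases, so at least one of $x,y$ does. If the counting step gets delicate, the fallback is to invoke the general fact that a binary matroid in which every skew class is a zero-sum triple of columns of full rank $|E|$ shelters a tight $3$-matroid.
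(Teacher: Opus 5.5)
The paper gives no proof of this lemma (it is quoted from Brijder and Traldi), so your argument has to stand on its own. Most of it does:
\begin{itemize}
\item the reduction through Brijder--Hoogeboom uniqueness is sound;
\item (c) is fine, and the proviso is not even needed there, because an independent transversal is automatically a maximal independent subtransversal;
\item axiom (1) is fine;
\item the determinant-sum proof of tightness is fine, once you know the bases of $Z'$ are exactly the nonsingular transversals.
\end{itemize}

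\textbf{The gap} is the step you flag yourself: extending independent subtransversals to independent transversals, which is equivalent to axiom (2). Your counting sketch uses the zero-sum identity, $\operatorname{rank}R=|E|$, $\dim\operatorname{span}(I)=|I|<|E|$, and the fact that the unit vectors span $\Ftwo^E$. None of these uses that $A$ is symmetric, and without symmetry the conclusion is false.

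Take $E=\{a,b\}$ and let $A$ have $A_{ab}=1$ and all other entries $0$. Then $R$ has rank $2$ and both classes are zero-sum triples. The set $I=\{b_2\}$ is independent, with column $A_b=\varepsilon_a$. But the columns of $\omega_a$ are $\varepsilon_a$, $A_a=0$ and $\varepsilon_a$, all in $\operatorname{span}(I)$, so $I$ cannot be extended into $\omega_a$ at all. The determinant-sum parity also holds in this example, since it uses only the zero-sum identity. So no argument built from these facts can work, and the ``general fact'' in your fallback is false as stated.

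\textbf{The missing idea} is isotropy, and this is exactly where symmetry enters. Suppose $I\cup\{x\}$ and $I\cup\{y\}$ are both dependent for distinct $x,y\in\omega_f$. Take circuits $Q\subseteq I\cup\{x\}$ with $x\in Q$ and $Q'\subseteq I\cup\{y\}$ with $y\in Q'$. For a subtransversal $Q$, let $\alpha_Q,\beta_Q\in\Ftwo^E$ be the indicator columns of $\{e:e_1\in Q\text{ or }e_3\in Q\}$ and $\{e:e_2\in Q\text{ or }e_3\in Q\}$, respectively. Then $\sum_{u\in Q}R_u=\alpha_Q+A\beta_Q$, so $\alpha_Q=A\beta_Q$ and $\alpha_{Q'}=A\beta_{Q'}$. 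Since $A=A^{\tr}$, this gives
\[
\alpha_Q^{\tr}\beta_{Q'}+\beta_Q^{\tr}\alpha_{Q'}=\beta_Q^{\tr}(A^{\tr}+A)\beta_{Q'}=0.
\]
Now evaluate the left side coordinatewise:
\begin{itemize}
\item For $e\neq f$, both $Q\cap\omega_e$ and $Q'\cap\omega_e$ lie in $I\cap\omega_e$, which has at most one element. A common element $e_1$, $e_2$ or $e_3$, or an absent one, contributes $0$.
\item At $f$, the distinct elements $x,y$ contribute $1$ in each of the three possible cases.
\end{itemize}
The left side is therefore $1$, a contradiction. This proves axiom (2) directly. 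Bases are then transversals, and the rest of your plan goes through.
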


\subsection{Orienting transversals}\label{subsec:orienting}

For a tight $3$-matroid $Z$, a transversal $T$ is called \emph{orienting} if $Z-T$ is a tight $2$-matroid. The set of orienting transversals is denoted $\Ort(Z)$. The following theorem of Brijder is the external parity result used in our evenness characterization.

\begin{lemma}[{\cite{Brijder2018}}]\label{lem:brijder-orienting}
If $Z$ is a binary tight
$3$-matroid and $T$ is a transversal, then
\[
T\in\Ort(Z)
\quad\Longleftrightarrow\quad
|Q\cap T|\text{ is even for every }Q\in\CS(Z).
\]
Equivalently, by \cref{prop:shelter-cycles},
$|C\cap T|$ is even for every circuit $C$ of $Z$.
\end{lemma}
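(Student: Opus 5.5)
The plan is to reduce to a matrix computation through the sheltering description of \cref{lem:compatible-shelter}, using \cref{lem:tight-even} to translate ``orienting'' into ``even''.

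\emph{Step 1: normalization.} Let $Z$ be a binary tight $3$-matroid and $T$ a transversal. Choose an ordering $\tau'=(T_1',T_2',T_3')$ of transversals that partitions $U$ and has $T_3'=T$; one exists, since in each skew class we may take the third element to be the one in $T$. Every such ordering arises from $\tau$ by the skew-class-wise operations $*e$ and $+e$. Hence, by \cref{lem:BH-correspondence}, $D':=D_{Z,\tau'}$ is obtained from $D_{Z,\tau}$ by vertex flips. I will use that $D'$ is again a binary delta-matroid; equivalently, that a binary tight $3$-matroid yields binary delta-matroids for every ordering. This is the step I expect to be the main obstacle. I would cite the known closure of binary delta-matroids under vertex flips, or argue directly from the sheltering matroid. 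By \cref{lem:tight-even}, $T\in\Ort(Z)$ if and only if $D'$ is even.

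Next, pick a feasible set $F$ of $D'$. Twisting by $F$ only interchanges the first two states in the classes $\omega_e$ with $e\in F$, so $T_3'$ is unchanged. Evenness is also unchanged, because twisting by $F$ shifts every feasible size by the same parity. By \cref{lem:normal-binary-rep} we may therefore assume $D'=D(A)$ for a symmetric $A$ over $\Ftwo$. Then $Z$ is sheltered by $M_R$ with $R=(I_E\;A\;A+I_E)$, by \cref{lem:compatible-shelter} and the uniqueness of $Z_3$.

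\emph{Step 2: evenness in matrix terms.} The delta-matroid $D(A)$ is normal, and $\{e\}$ is feasible exactly when $A_{ee}=1$. If $A$ has zero diagonal, it is alternating, so every nonsingular principal submatrix has even order. Hence $D(A)$ is even if and only if $A$ has zero diagonal.

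\emph{Step 3: cycles.} By \cref{prop:shelter-cycles}, the cycles of $Z$ are the subtransversal members of $\CS(M_R)$. These are the triples $(x,y,z)$ with pairwise disjoint supports and
\[
x+Ay+(A+I_E)z=0.
\]

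Suppose $A$ has zero diagonal. Pair this equation with $y$. The terms $x\cdot y$ and $y\cdot z$ vanish by disjointness, and $yAy$ vanishes because $A$ is alternating, so $yAz=0$. Pair it with $z$ instead. Now $x\cdot z=0$ and $zAz=0$, so $zAy+z\cdot z=0$. Since $A$ is symmetric, $zAy=yAz=0$, and therefore $|Q\cap T_3'|\equiv z\cdot z=0$.

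Conversely, suppose $A_{ee}=1$. The fundamental circuit of $e_3$ with respect to the basis $T_1'$ of $M_R$ is $\{e_3\}$ together with the support of column $e$ of $A+I_E$ inside $T_1'$. This column has $0$ in position $e$, so the circuit avoids $e_1$ and is a subtransversal. It therefore lies in $\CS(Z)$ and meets $T$ in exactly one element, which is odd.

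\emph{Step 4: the circuit form.} Every circuit of $Z$ is a cycle of $Z$. Conversely, by \cref{prop:shelter-cycles} every cycle of $Z$ is a disjoint union of circuits of $Z$, and the parity of $|Q\cap T|$ is additive over such a union. So the cycle condition and the circuit condition are equivalent.
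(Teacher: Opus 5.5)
\textbf{The gap is the binarity claim you flagged in Step~1.} Citing closure of binary delta-matroids under vertex flips does not close it. The lemma assumes only that $Z$ is sheltered by \emph{some} binary matroid $M$, so nothing yet tells you that $D_{Z,\tau}$ is binary for any ordering. Vertex-flip closure only transports binarity from one ordering to another, and it needs a base case you do not have.

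Nor can a matrix of the shape $(I_E\;A\;A+I_E)$ be read off from $M$. Such an $M$ can have rank larger than $|E|$: take one skew class, $e_3$ a loop, and $\{e_1,e_2\}$ independent in $M$.

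The base case can be supplied directly from the sheltering matroid, and this is the missing idea.
\begin{itemize}
\item Take a basis $B=\{b_e:e\in E\}$ of $Z$ as the first transversal. Tightness at $B$ picks, in each $\omega_e$, exactly one element $e_d\notin B$ whose column (in a representation of $M$) lies in the span of the columns of $B\setminus\{b_e\}$.
\item Put $T_d=\{e_d\}$ and let $T_i$ be the remaining elements. Then $D_{Z,(B,T_d,T_i)}=D(C)$ for a zero-diagonal coefficient matrix $C$.
\item Suppose the coefficient of $b_e$ in $f_d$ were $1$ but that of $b_f$ in $e_d$ were $0$. Then $(B\setminus\{b_e,b_f\})\cup\{f_d\}$ is independent in $M$, hence in $Z$. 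But both of its extensions by the skew pair $\{b_e,e_d\}$ are dependent, which contradicts the multimatroid axiom~(2).
\item Hence $C$ is symmetric and $D_{Z,(B,T_d,T_i)}$ is binary. Only at this point does vertex-flip closure finish Step~1.
\end{itemize}

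With that inserted, the rest of your argument is correct. The paper gives no proof of this lemma (it is quoted from Brijder), so your route is a genuinely self-contained alternative.
\begin{itemize}
\item Evenness of $D(A)$ is exactly $A$ having zero diagonal.
\item Your two pairings give $|Q\cap T|\equiv z\cdot z\equiv 0$. This is equivalent to pairing with $w=y+z$, as in the proof of \cref{thm:transition-bipartite}.
\item When $A_{ee}=1$, the fundamental circuit of $e_3$ with respect to the identity block is a subtransversal circuit meeting $T$ once, just like $K_e$ in \cref{thm:bipartite-parity}.
\end{itemize}
In the only case the paper actually uses, \cref{thm:evenness-parity} with $Z=Z_3(D,\tau)$ and $D$ binary, $D_{Z,\tau}=D$ is itself the base case. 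So your original Step~1 already suffices there.
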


\section{Circuit-parity characterizations}\label{sec:parity}

Let $D$ be a binary delta-matroid, $Z=Z_3(D,\tau)$ and $\tau=(T_1,T_2,T_3)$. For each circuit $C$ of $Z$, define
\begin{equation*}\label{eq:pi-def}
p_i(C)=|C\cap T_i|\pmod 2,
\qquad i\in\{1,2,3\}.
\end{equation*}

\subsection{Evenness}\label{subsec:evenness}

\begin{theorem}\label{thm:evenness-parity}
The delta-matroid $D$ is even if and only if
\[
p_3(C)=0,
\qquad\text{for every }C\in\cC(Z).
\]
\end{theorem}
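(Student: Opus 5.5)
The natural route is to chain the two earlier external results: \cref{lem:tight-even}, which converts evenness of $D$ into tightness of $Z-T_3$, and \cref{lem:brijder-orienting}, which converts tightness of $Z-T$ into a parity condition on circuits. Concretely, $D=D_{Z,\tau}$ by the definition of $Z=Z_3(D,\tau)$. So \cref{lem:tight-even} applied to the tight $3$-matroid $Z$ gives
\[
D \text{ is even} \iff Z-T_3 \text{ is tight}.
\]
Since $Z-T_3$ is a $2$-matroid, the right-hand side says exactly that $T_3\in\Ort(Z)$.

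It then remains to apply \cref{lem:brijder-orienting} with $T=T_3$. This needs $Z$ to be binary. That is the one genuine step to check, and I will handle it as follows.

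By \cref{lem:normal-binary-rep}, choose a feasible set $F$ with $D*F=D(A)$ for a symmetric matrix $A$ over $\Ftwo$. By \cref{lem:BH-correspondence}, iterated over the elements of $F$, we have $D*F=D_{Z,\tau*F}$. By the uniqueness part of the Brijder--Hoogeboom correspondence, $Z=Z_3(D*F,\tau*F)$. Then \cref{lem:compatible-shelter}, applied to $D(A)$ with the triple $\tau*F$, shows that $Z$ is sheltered by a binary matroid, so $Z$ is binary. The twist only relabels which element of each skew class sits in which transversal; the underlying $3$-matroid on $(U,\Omega)$ is the same. The matrix in \cref{lem:compatible-shelter} is therefore just read with columns assigned according to $\tau*F$.

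With $Z$ binary, \cref{lem:brijder-orienting} gives
\[
T_3\in\Ort(Z) \iff |C\cap T_3| \text{ is even for every } C\in\cC(Z),
\]
which is $p_3(C)=0$ for all circuits $C$. This uses the circuit form of the lemma, justified via \cref{prop:shelter-cycles}: every cycle is a disjoint union of circuits, so parity on circuits is equivalent to parity on cycles.

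Chaining the three equivalences proves the theorem. The only delicate point is the binarity and uniqueness bookkeeping under the twist by $F$. In particular, I must make sure that twisting does not move $T_3$, which holds since $*e$ fixes the third transversal.
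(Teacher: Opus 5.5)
Your proposal is correct and follows the paper's own proof: \cref{lem:tight-even} turns evenness of $D=D_{Z,\tau}$ into tightness of $Z-T_3$, i.e.\ $T_3\in\Ort(Z)$, and \cref{lem:brijder-orienting} turns this into $p_3(C)=0$ for every circuit $C$. The one addition is your explicit check that $Z$ is binary (twist by a feasible set, use uniqueness of $Z_3$, then \cref{lem:compatible-shelter}); the paper leaves this implicit here but carries it out the same way in the proof of \cref{thm:bipartite-parity}, and your remark that $*e$ fixes $T_3$ is harmless but unnecessary, since binarity depends only on $Z$.
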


\begin{proof}
By \cref{lem:tight-even}, $D=D_{Z,\tau}$ is even if and only if $Z-T_3$ is tight. By definition, the latter condition says that $T_3$ is orienting. The result now follows from \cref{lem:brijder-orienting}.
\end{proof}

Permuting the transversal triple gives the other two evenness conditions needed later.
\begin{corollary}\label{cor:three-evenness}
Let $Z=Z_3(D,\tau)$ with fixed transversal triple
$\tau=(T_1,T_2,T_3)$. Then
\begin{align*}
D\text{ is even}
&\quad\Longleftrightarrow\quad
p_3(C)=0,
\qquad\text{for all }C\in\cC(Z),\\
D^{+}\text{ is even}
&\quad\Longleftrightarrow\quad
p_2(C)=0,
\qquad\text{for all }C\in\cC(Z),\\
D^{*+}\text{ is even}
&\quad\Longleftrightarrow\quad
p_1(C)=0,
\qquad\text{for all }C\in\cC(Z).
\end{align*}
\end{corollary}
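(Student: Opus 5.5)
The plan is to apply \cref{thm:evenness-parity} to the vertex-flipped delta-matroids $D^{+}$ and $D^{*+}$. Then we identify their associated tight $3$-matroids and transversal triples with $Z$ and a permutation of $\tau$.

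The first equivalence is exactly \cref{thm:evenness-parity}, so nothing needs to be done there.

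For $D^{+}=D+E$, apply \cref{lem:BH-correspondence} once for each $e\in E$. Since operations on distinct skew classes commute, this gives $D^{+}=D_{Z,\tau+E}$. By \eqref{eq:global-triple-actions}, $\tau+E=(T_1,T_3,T_2)$.

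We then need $Z_3(D^{+},\tau')=Z$ under the relabelling of states $\tau'=(T_1,T_3,T_2)$. This follows from the uniqueness statement of Brijder and Hoogeboom. $Z$ is a tight $3$-matroid with $D^{+}=D_{Z,(T_1,T_3,T_2)}$, and it is the unique such one once we identify the $i$-th transversal of the new triple with the $i$-th state. Also, $D^{+}$ is again binary, since binary delta-matroids are closed under vertex flips. So \cref{thm:evenness-parity} applies to $D^{+}$, whose third transversal is now $T_2$. Hence $D^{+}$ is even iff $|C\cap T_2|$ is even for every circuit $C$ of $Z$, that is, iff $p_2(C)=0$ for all $C$.

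The same argument handles $D^{*+}=(D*E)+E$. By \eqref{eq:composed-triple-actions}, the triple becomes $(T_2,T_3,T_1)$, whose third member is $T_1$. This yields the condition $p_1(C)=0$.

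The only delicate step is the bookkeeping that the circuits used in \cref{thm:evenness-parity} for $D^{+}$ are the circuits of the same $Z$. This holds because the underlying multimatroid and carrier are unchanged and only the labelling $\tau$ is permuted. Circuits depend on $Z$ alone, so the parities $|C\cap T_i|$ are read off the same family $\cC(Z)$. One should also check that composing the single-element actions in \cref{lem:BH-correspondence}, left to right, indeed produces the stated global permutation. That check is routine.
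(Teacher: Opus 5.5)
Your proposal is correct and follows the paper's proof. Both arguments use \cref{lem:BH-correspondence} together with the global triple actions to see that $D^{+}$ and $D^{*+}$ correspond to the same $Z$ with triples $(T_1,T_3,T_2)$ and $(T_2,T_3,T_1)$, and then invoke \cref{thm:evenness-parity}. Your remarks on the uniqueness of $Z_3$ and on vertex flips preserving binarity make explicit what the paper leaves implicit; strictly, only the binarity of $Z$ itself is needed, through \cref{lem:brijder-orienting}.
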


\begin{proof}
The first equivalence is \cref{thm:evenness-parity}.
By \cref{lem:BH-correspondence} and
\eqref{eq:global-triple-actions}--\eqref{eq:composed-triple-actions},
we have
\[
\tau+E=(T_1,T_3,T_2),
\qquad
\tau*E+E=(T_2,T_3,T_1).
\]
Thus the third transversals associated with $D^{+}$ and $D^{*+}$
are, respectively, the original $T_2$ and $T_1$.
The remaining two equivalences now follow from
\cref{thm:evenness-parity}.
\end{proof}

\subsection{The lower matroid in a symmetric representation}\label{subsec:lower-rep}

Before treating bipartiteness, we derive an explicit matrix
representation of the lower matroid. The result is useful independently of the Fano framework.

\begin{proposition}\label{prop:lower-representation}
Let $D$ be binary delta-matroid and let $B$ be a minimum cardinality feasible set. Put $N=E\setminus B$, and choose a symmetric matrix $A$ over $\Ftwo$ such that $D*B=D(A)$. Then
\begin{equation}\label{eq:A-block}
A=\begin{pmatrix}0&P\\P^{\tr}&Q\end{pmatrix}
\end{equation}
with respect to $E=B\cup N$, and
\begin{equation}\label{eq:lower-standard}
D_{\min}=M[I_B\mid P].
\end{equation}
\end{proposition}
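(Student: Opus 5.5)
The plan is to work directly with the twisted delta-matroid $D':=D*B=D(A)$. Its feasible sets are exactly the sets $F\symdiff B$ with $F\in\cF$, and these are precisely the sets $X$ with $A[X]$ nonsingular. So the first step is to translate minimality of $|B|$ into constraints on $A$, and then to read off the bases of $D_{\min}$ from principal minors.

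\emph{Step 1: the block $A[B]$ vanishes.} Take $e,f\in B$. If $A_{ee}=1$, then $\{e\}$ is feasible in $D(A)$. Hence $B\symdiff\{e\}=B\setminus\{e\}$ is feasible in $D$, and this contradicts the minimality of $|B|$. So the diagonal of $A[B]$ is zero. If moreover $A_{ef}=1$ for $e\neq f$, then $A[\{e,f\}]=\begin{pmatrix}0&1\\1&0\end{pmatrix}$ is nonsingular. Thus $B\setminus\{e,f\}$ is feasible in $D$, again contradicting minimality. Hence $A[B]=0$, which gives the block form \eqref{eq:A-block} with $P=A[B,N]$ and $Q=A[N]$.

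\emph{Step 2: identify the minimum feasible sets.} For $X\subseteq B$ and $Y\subseteq N$, the set $F=B\symdiff(X\cup Y)=(B\setminus X)\cup Y$ is feasible in $D$ exactly when
\[
A[X\cup Y]=\begin{pmatrix}0&P[X,Y]\\P[X,Y]^{\tr}&Q[Y]\end{pmatrix}
\]
is nonsingular, and $|F|=|B|-|X|+|Y|$. Suppose $|X|>|Y|$. Then the first $|X|$ rows have the form $(0\mid P[X,Y])$ and span a space of rank at most $|Y|<|X|$, so the matrix is singular. This gives a second, independent proof that $|B|$ is the minimum size. The minimum-size feasible sets therefore correspond exactly to $|X|=|Y|$ with the matrix nonsingular. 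In that square case the determinant over $\Ftwo$ expands by the zero block as $\det P[X,Y]\cdot\det P[X,Y]^{\tr}=(\det P[X,Y])^2$. So the condition is just that $P[X,Y]$ is nonsingular, and $Q$ plays no role.

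\emph{Step 3: compare with $M[I_B\mid P]$.} In the matroid represented by $[I_B\mid P]$, with columns indexed by $B\cup N$, a set $(B\setminus X)\cup Y$ of size $|B|$ is a basis exactly when its columns are independent. After deleting the identity columns indexed by $B\setminus X$, this holds iff the submatrix $P[X,Y]$ is nonsingular (standard pivoting/Laplace). Every $|B|$-subset of $E$ can be written uniquely in this form. Hence $\cF_{\min}(D)$ equals the basis family of $M[I_B\mid P]$, which proves \eqref{eq:lower-standard}.

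The main obstacle is the block-determinant step: making sure that singularity for $|X|>|Y|$ and the reduction to $\det P[X,Y]$ when $|X|=|Y|$ are handled cleanly, including the degenerate cases $X=Y=\varnothing$ and the empty-matrix convention. The rest is bookkeeping with the twist.
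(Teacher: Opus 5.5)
Your proposal is correct and follows essentially the same route as the paper: the same minimality argument forces $A[B]=0$, then $F=(B\setminus X)\cup Y$ is parametrized the same way, and the bases are compared with those of $[I_B\mid P]$ in the same way. The only differences are cosmetic. You use the factorization $\det A[X\cup Y]=(\det P[X,Y])^2$ over $\mathbb F_2$, where the paper gives an explicit kernel argument. Your remark that $|X|>|Y|$ forces singularity is harmless but redundant, since minimality of $|B|$ is a hypothesis.
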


\begin{proof}
We first prove that $A[B,B]=0$. If $A_{bb}=1$ for some $b\in B$, then $A[\{b\}]$ is nonsingular, so $\{b\}$ is feasible in $D(A)=D*B$. Hence $B\setminus\{b\}$ is feasible in $D$, contradicting the minimality of $B$.

Now let $b,b'\in B$ be distinct. The diagonal entries at $b$ and $b'$ are zero. If $A_{bb'}=1$, then
\[
A[\{b,b'\}]=\begin{pmatrix}0&1\\1&0\end{pmatrix}
\]
is nonsingular, and $B\setminus\{b,b'\}$ is feasible in $D$, again a contradiction. This proves \eqref{eq:A-block}.

It remains to compare the bases of $D_{\min}$ with those of $M[I_B\mid P]$. Every set $F\subseteq E$ of cardinality $|B|$ can be written uniquely as
\[
F=(B\setminus S)\cup T,
\qquad S\subseteq B,
\quad T\subseteq N,
\quad |S|=|T|.
\]
Since $D*B=D(A)$,
\[
F\in\cF(D)
\quad\Longleftrightarrow\quad
S\cup T\in\cF(D(A))
\quad\Longleftrightarrow\quad
A[S\cup T]\text{ is nonsingular}.
\]
By \eqref{eq:A-block},
\[
A[S\cup T]
=
\begin{pmatrix}
0&P[S,T]\\
P[S,T]^{\tr}&Q[T,T]
\end{pmatrix}.
\]
Because $|S|=|T|$, the matrix above is nonsingular if and only if $P[S,T]$ is nonsingular. Indeed, if \(P[S,T]\) is nonsingular, then every vector in the kernel of \(A[S\cup T]\) must be zero.
Conversely, if $P[S,T]$ is singular, choose
$0\neq x\in\ker P[S,T]^{\tr}$. Then
\[
\binom{x}{0}
\]
is a nonzero vector in the kernel of $A[S\cup T]$.

On the other hand, the columns of $[I_B\mid P]$ indexed by $(B\setminus S)\cup T$ form, after row and column permutations, a matrix of the form
\[
\begin{pmatrix}
I_{B\setminus S}&*\\0&P[S,T]
\end{pmatrix}.
\]
They are linearly independent if and only if $P[S,T]$ is nonsingular. Thus the two matroids have the same bases, proving \eqref{eq:lower-standard}.
\end{proof}

\begin{corollary}\label{cor:bipartite-column-sums}
Under the hypotheses of \cref{prop:lower-representation}, $D$ is bipartite if and only if
\begin{equation}\label{eq:odd-columns}
\sum_{b\in B}A_{be}=1
\qquad\text{for every }e\in N.
\end{equation}
Equivalently, every column of $P$ has odd weight.
\end{corollary}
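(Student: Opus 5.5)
By \cref{prop:lower-representation}, $D_{\min}=M[I_B\mid P]$, so by definition $D$ is bipartite exactly when every circuit of the binary matroid $M:=M[I_B\mid P]$ on $E$ has even cardinality. The plan is to apply \cref{even} to the basis $B$ of $M$ and read the fundamental circuits straight off the standard-form matrix. Since circuits are cycles, and every cycle is a disjoint union of circuits, ``every circuit is even'' is the same as ``every cycle is even''. By \cref{even}, this holds iff every fundamental circuit $C(e,B)$ with $e\in N$ is even.

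Next I compute $C(e,B)$ for $e\in N$. In $[I_B\mid P]$ the column of $e$ is $P[\,\cdot\,,e]=\sum_{b\in B}P_{be}\,\chi_{\{b\}}^{\tr}$, which is the sum of the identity columns indexed by $S_e:=\{b\in B:P_{be}=1\}$. Hence the columns indexed by $S_e\cup\{e\}$ sum to zero, so $S_e\cup\{e\}$ is dependent and lies in $B\cup\{e\}$. It is minimal: any proper subset either omits $e$, and is then contained in the independent set $B$, or contains $e$ but omits some $b\in S_e$, and then no linear relation can involve $e$, since the coordinate $b$ would not cancel. Thus $C(e,B)=S_e\cup\{e\}$, and $|C(e,B)|=1+\sum_{b\in B}P_{be}$. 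This is even iff column $e$ of $P$ has odd weight.

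Finally, \eqref{eq:A-block} gives $P_{be}=A_{be}$ for $b\in B$, $e\in N$, so the condition is exactly \eqref{eq:odd-columns}. There is no real obstacle here. The only point needing care is the identification of fundamental circuits with the supports of the columns of $P$ (plus $e$), which is the standard fact about standard-form representations and is verified above. The only other thing to keep straight is the passage between ``circuits even'' and ``cycles even'' before invoking \cref{even}.
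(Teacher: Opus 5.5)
Your proposal is correct and follows the paper's argument. Like the paper, you identify $D_{\min}$ with $M[I_B\mid P]$, read off the fundamental circuits $\{e\}\cup\{b\in B:A_{be}=1\}$ from the standard-form representation, and apply \cref{even}; you also justify explicitly two points the paper leaves implicit, namely the minimality of these fundamental circuits and the passage between ``every circuit is even'' and ``every cycle is even''.
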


\begin{proof}
In the representation $[I_B\mid P]$, the fundamental circuit of $e\in N$ with respect to $B$ is
\begin{equation}\label{eq:fundamental-lower}
C_e=\{e\}\cup\{b\in B:A_{be}=1\}.
\end{equation}
Hence
\[
|C_e|\equiv 1+\sum_{b\in B}A_{be}\pmod 2.
\]
By \cref{even}, all circuits of $D_{\min}$ are even if and only if all the $C_e$ are even, which is exactly \eqref{eq:odd-columns}.
\end{proof}

\subsection{Bipartiteness}\label{subsec:bipartite}

We now prove the second parity characterization. 

\begin{theorem}[Bipartiteness parity criterion]
\label{thm:bipartite-parity}
Let $D$ be a binary delta-matroid, let $Z=Z_3(D,\tau),$
and $\tau=(T_1,T_2,T_3)$.
Then $D$ is bipartite if and only if
\[
|C\cap(T_2\cup T_3)|\equiv0\pmod2
\qquad
\text{for every }C\in\cC(Z).
\]
Equivalently,
\[
p_2(C)+p_3(C)=0
\qquad
\text{for every }C\in\cC(Z).
\]
\end{theorem}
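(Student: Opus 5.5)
The plan is to reduce to the normal representation of \cref{prop:lower-representation} and compute the parity functional directly on the cycles of the sheltering binary matroid from \cref{lem:compatible-shelter}.

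\textbf{Step 1: change of transversal triple.} Fix a minimum-cardinality feasible set $B$, put $N=E\setminus B$, and take $A$ symmetric with $D*B=D(A)$, in the block form \eqref{eq:A-block}. By \cref{lem:BH-correspondence}, $D*B=D_{Z,\tau*B}$. By the uniqueness in the Brijder--Hoogeboom correspondence, $Z=Z_3(D*B,\tau')$ with
\[
\tau'=\tau*B=(T_1',T_2',T_3).
\]
Here $T_1',T_2'$ agree with $T_1,T_2$ on $N$ and are swapped on $B$. Hence \cref{lem:compatible-shelter} applied to $D*B$ and $\tau'$ says that $Z$ is sheltered by $M_R$, with $R=(I_E\mid A\mid A+I_E)$ on $(T_1',T_2',T_3)$. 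Translating back:
\begin{itemize}
\item for $e\in B$, the set $\{e_2,e_3\}$ consists of the $T_1'$- and $T_3$-elements of $\omega_e$;
\item for $e\in N$, it consists of the $T_2'$- and $T_3$-elements.
\end{itemize}

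\textbf{Step 2: reduce to cycles and compute the functional.} By \cref{prop:shelter-cycles}, every cycle of $Z$ is a disjoint union of circuits, and the parity $|X\cap(T_2\cup T_3)|$ is additive. So the condition for all circuits is equivalent to the condition for all subtransversal vectors $x=(x_1,x_2,x_3)$ in $\ker R$, i.e. vectors with $x_1+Ax_2+(A+I)x_3=0$.

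Set $y=x_2+x_3$ and $z=x_3$. Then $x_1=Ay+z$. Using Step 1, the parity to control is
\[
f(x)=\sum_{b\in B}\bigl((x_1)_b+z_b\bigr)+\sum_{e\in N}y_e .
\]
Substituting $x_1=Ay+z$, the $z$-terms on $B$ cancel, leaving
\[
f(x)=(\ind_B A+\ind_N)\,y^{\tr}.
\]
Since $A[B,B]=0$, the vector $\ind_B A$ vanishes on $B$. Its restriction to $N$ is the vector of column sums of $P$.

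\textbf{Step 3: both directions.} If $D$ is bipartite, \cref{cor:bipartite-column-sums} gives $\ind_B A=\ind_N$. Then $f\equiv0$ on all cycles, and in particular on all circuits.

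Conversely, suppose some column $e\in N$ of $P$ has even weight. I will build an explicit subtransversal cycle:
\begin{itemize}
\item take $y=\chi_{\{e\}}$;
\item place the $1$ at coordinate $e$ in $x_2$ if $A_{ee}=0$, and in $x_3$ if $A_{ee}=1$, so that $(x_1)_e=A_{ee}+z_e=0$;
\item set $x_1=A\chi_{\{e\}}^{\tr}+z$, which is supported off $e$ where $y$ and $z$ vanish.
\end{itemize}
This vector is subtransversal and lies in $\ker R$, so it is a cycle of $Z$. Its value is $f=\sum_{b}A_{be}+1=1$. Decomposing this cycle into disjoint circuits, some circuit must have odd $|C\cap(T_2\cup T_3)|$, which gives the converse.

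The main obstacle is Step 1: correctly tracking how $\tau*B$ relabels states on $B$, and justifying that $Z_3(D,\tau)=Z_3(D*B,\tau*B)$ via uniqueness. Once that bookkeeping is right, the cancellation of $z$ in $f$ is the key computation.
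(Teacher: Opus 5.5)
Your proposal is correct and essentially the paper's proof: the same passage to $\tau*B$ and the sheltering matrix $(I_E\mid A\mid A+I_E)$, with your forward substitution $x_1=Ay+z$ amounting to the paper's identity $\chi_B R=\chi_{T_2\cup T_3}$, and your converse vector being exactly the paper's fundamental circuit $K_e$ of $r_e$ with respect to $T_1'$. The only cosmetic difference is that you treat $K_e$ as a cycle and decompose it into circuits, rather than observing directly, as the paper does, that it is itself a circuit of $Z$.
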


\begin{proof}
Choose a basis $B$ of $D_{\min}$ and put $N=E\setminus B$. By \cref{lem:normal-binary-rep}, choose a symmetric matrix $A$ such that $D*B=D(A)$. By \cref{prop:lower-representation}, $A[B,B]=0$ and has the block form \eqref{eq:A-block}.

Let
\[
\tau'=\tau*B.
\]
By \cref{lem:BH-correspondence}, $D_{Z,\tau'}=D*B=D(A).$
Applying \cref{lem:compatible-shelter} to $D(A)$ with respect to
$\tau'$, we see that $Z$ is sheltered by the binary matroid
represented by
\[
\bordermatrix{
    & T'_1 & T'_2 & T'_3 \cr
    & I_E & A & A+I_E
}.
\]

The original set $T_2\cup T_3$ has the following description in the new coordinates:
\begin{equation}\label{eq:old-union-new-coordinates}
T_2\cup T_3
=
T'_1(B)\cup T'_2(N)\cup T'_3(E).
\end{equation}
Indeed, the first two states are interchanged on $B$ and unchanged on $N$, while the third state is never changed.

Assume first that $D$ is bipartite. Recall $\chi_S$ denotes the incidence row vector of $S$. Let $x=\chi_B\in\Ftwo^E$. Since $A[B,B]=0$,
\[
xI_E=(\ind_B,0_N),
\]
where $\ind_B$ is the all-ones row vector indexed by $B$. By \cref{cor:bipartite-column-sums},
\[
xA=(0_B,\ind_N).
\]
Consequently,
\[
x(A+I_E)=\ind_E.
\]
Together with \eqref{eq:old-union-new-coordinates}, these identities say that
\begin{equation}\label{eq:rowspace-indicator}
xR=\chi_{T_2\cup T_3}.
\end{equation}

Let $C$ be a circuit of $Z$. By \cref{prop:shelter-cycles}, $C$ is a circuit of the binary matroid $M_R$. A minimally dependent set of columns over $\Ftwo$ has the unique nonzero dependence with every coefficient equal to $1$, and hence
\begin{equation}\label{eq:circuit-column-sum}
\sum_{u\in C}R_u=0,
\end{equation}
where $R_u$ is the column of $R$ indexed by $u$.
Using \eqref{eq:rowspace-indicator} and \eqref{eq:circuit-column-sum},
\[
|C\cap(T_2\cup T_3)|
\equiv
\chi_{T_2\cup T_3}\chi_C^{\tr}
=
xR\chi_C^{\tr}
=
x\left(\sum_{u\in C}R_u\right)
=0.
\]
This proves the forward implication.

Conversely, suppose that every circuit of $Z$ meets $T_2\cup T_3$ evenly. Fix $e\in N$. Among the two columns $A_e$ and $(A+I_E)_e$, choose
\[
r_e=
\begin{cases}
A_e,&A_{ee}=0,\\
(A+I_E)_e,&A_{ee}=1.
\end{cases}
\]
Thus the $e$th coordinate of $r_e$ is zero. Relative to the basis $T'_1$, represented by the identity block, the fundamental circuit of $r_e$ in the $M_R$ is
\begin{equation}\label{eq:Ke}
K_e=\{r_e\}\cup\{T'_1(f):(r_e)_f=1\}.
\end{equation}
The equality $(r_e)_e=0$ ensures that $K_e$ does not contain $T'_1(e)$ in addition to $r_e$. Hence $K_e$ is a subtransversal, and \cref{prop:shelter-cycles} implies that it is a circuit of $Z$.

Since $e\in N$, the chosen element $r_e$ lies in
$T_2\cup T_3$ and hence contributes one element to
$K_e\cap(T_2\cup T_3)$. The remaining elements of this
intersection come from the identity block and are precisely
those $T_1'(b)$ with $b\in B$ for which $(r_e)_b=1$.
Therefore
\[
|K_e\cap(T_2\cup T_3)|
\equiv
1+\sum_{b\in B}(r_e)_b
\pmod 2.
\]
Since $b\neq e$ for every $b\in B$, the $b$th coordinate of
$r_e$ is $A_{be}$ whether $r_e=A_e$ or
$r_e=(A+I_E)_e$. Hence
\[
|K_e\cap(T_2\cup T_3)|
\equiv
1+\sum_{b\in B}A_{be}
\pmod 2.
\]

The left-hand side is zero by hypothesis, so \eqref{eq:odd-columns} holds for every $e\in N$. By \cref{cor:bipartite-column-sums}, $D$ is bipartite.
\end{proof}

As with evenness, the remaining bipartiteness conditions are obtained by permuting the transversal triple.

\begin{corollary}\label{cor:three-bipartite}
For every circuit $C$ of $Z$,
\begin{align*}
D\text{ is bipartite}
&\quad\Longleftrightarrow\quad p_2(C)+p_3(C)=0\text{ for all }C,\\
D^{*}\text{ is bipartite}
&\quad\Longleftrightarrow\quad p_1(C)+p_3(C)=0\text{ for all }C,\\
D^{+*}\text{ is bipartite}
&\quad\Longleftrightarrow\quad p_1(C)+p_2(C)=0\text{ for all }C.
\end{align*}
\end{corollary}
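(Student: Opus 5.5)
The plan mirrors the proof of \cref{cor:three-evenness}: apply \cref{thm:bipartite-parity} to the relevant vertex-flipped delta-matroid, and read off which transversals play the roles of its second and third transversal. The key point is that $Z_3(D,\tau)$ is a single fixed tight $3$-matroid. By \cref{lem:BH-correspondence}, a global vertex flip of $D$ corresponds to the same $Z$ with a permuted transversal triple. So $Z_3(D^{*},\tau*E)=Z$, and likewise for the other flips, by the uniqueness in the Brijder--Hoogeboom correspondence. The circuit family $\cC(Z)$ is therefore the same throughout, and only the labelling of $T_1,T_2,T_3$ changes.

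The first equivalence is \cref{thm:bipartite-parity} itself. For the second, $D^{*}$ is binary, being a twist of a binary delta-matroid. By \cref{lem:BH-correspondence} and \eqref{eq:global-triple-actions}, $D^{*}=D_{Z,\tau*E}$ with $\tau*E=(T_2,T_1,T_3)$. Applying \cref{thm:bipartite-parity} to $D^{*}$ and this triple, its second and third transversals are $T_1$ and $T_3$. Hence $D^{*}$ is bipartite if and only if $|C\cap(T_1\cup T_3)|$ is even for all $C\in\cC(Z)$, that is, $p_1(C)+p_3(C)=0$.

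For the third, $D^{+*}$ is binary because binary delta-matroids are closed under vertex flips, being vf-safe with loop complementation preserving the symmetric-matrix representation. By \eqref{eq:composed-triple-actions}, $D^{+*}=D_{Z,\tau+E*E}$ with triple $(T_3,T_1,T_2)$. Its second and third transversals are $T_1$ and $T_2$, which gives $p_1(C)+p_2(C)=0$. Here I use $|C\cap(T_i\cup T_j)|\equiv p_i(C)+p_j(C)$, which holds because the $T_i$ are disjoint.

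The only step needing care is justifying that \cref{thm:bipartite-parity} applies to the flipped delta-matroid with the permuted triple and the same $Z$. This requires two facts:
\begin{itemize}
\item the flipped delta-matroid is binary, which holds because the binary class is closed under twist and loop complementation;
\item $Z_3$ of the flipped delta-matroid with respect to the permuted triple equals $Z$, which follows from \cref{lem:BH-correspondence} together with the uniqueness of the tight $3$-matroid realizing a given $D_{Z,\tau}$.
\end{itemize}
Once these are noted, the rest is bookkeeping of the permutations in \eqref{eq:global-triple-actions}--\eqref{eq:composed-triple-actions}.
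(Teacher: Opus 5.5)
Your proposal is correct and is essentially the paper's proof: apply \cref{thm:bipartite-parity} to $D^{*}$ and $D^{+*}$ with the permuted triples $\tau*E=(T_2,T_1,T_3)$ and $(\tau+E)*E=(T_3,T_1,T_2)$ supplied by \cref{lem:BH-correspondence}, and read off which transversals are second and third. You also check explicitly that the flipped delta-matroids are again binary and that uniqueness identifies their $Z_3$ with the same $Z$; the paper leaves both points implicit, and binary closure under twist and loop complementation is a standard fact that does not follow from vf-safety alone.
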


\begin{proof}
The first assertion is \cref{thm:bipartite-parity}.
By \cref{lem:BH-correspondence} and
\eqref{eq:global-triple-actions}--\eqref{eq:composed-triple-actions},
we have
\[
\tau*E=(T_2,T_1,T_3),
\qquad
(\tau+E)*E=(T_3,T_1,T_2).
\]
The remaining two equivalences now follow from
\cref{thm:bipartite-parity}.
\end{proof}

\section{The Fano framework}\label{sec:fano}

\begin{definition}\label{def:z3-bipartite}
A binary delta-matroid $D$ is \emph{$Z_3$-bipartite} if every circuit of $Z_3(D,\tau)$ has even cardinality.
\end{definition}

Because every circuit of $Z$ is a subtransversal and $U(Z)=T_1\cup T_2\cup T_3$, $Z_3$-bipartiteness is equivalent to
\[
p_1(C)+p_2(C)+p_3(C)=0
\qquad\text{for every circuit }C.
\]
Thus the seven properties are summarized by \cref{tab:dictionary}.

\begin{table}[ht]
\centering
\caption{The seven properties as circuit-parity conditions. Every displayed equation is required to hold for all circuits $C$ of $Z_3(D,\tau)$.}\label{tab:dictionary}
\begin{tabularx}{\textwidth}{@{}cYY@{}}
\toprule
Vector & Delta-matroid property & Circuit-parity condition \\
\midrule
$001$ & $D$ is even & $p_3(C)=0$ \\
$010$ & $D^{+}$ is even & $p_2(C)=0$ \\
$011$ & $D$ is bipartite & $p_2(C)+p_3(C)=0$ \\
$100$ & $D^{*+}$ is even & $p_1(C)=0$ \\
$101$ & $D^{*}$ is bipartite & $p_1(C)+p_3(C)=0$ \\
$110$ & $D^{+*}$ is bipartite & $p_1(C)+p_2(C)=0$ \\
$111$ & $D$ is $Z_3$-bipartite & $p_1(C)+p_2(C)+p_3(C)=0$ \\
\bottomrule
\end{tabularx}
\end{table}

For $\beta=(\beta_1,\beta_2,\beta_3)\in\Ftwo^3$, let $P_\beta(D)$ denote the condition
\begin{equation}\label{eq:Pbeta}
\beta_1p_1(C)+\beta_2p_2(C)+\beta_3p_3(C)=0
\qquad\text{for every }C\in\cC(Z_3(D,\tau)).
\end{equation}
For $\beta\neq 000$, this is exactly the corresponding property in \cref{tab:dictionary}; $P_{000}$ is the always-true property.

\begin{theorem}[Fano framework]\label{thm:fano-framework}
Let $D$ be a binary delta-matroid and define
\[
X(D)=\{\beta\in\Ftwo^3:P_\beta(D)\text{ holds}\}.
\]
Then $X(D)$ is a vector subspace of $\Ftwo^3$. Consequently, the satisfied Fano points are $X(D)\setminus\{000\}$, the projectivization of $X(D)$, which may be empty; see \cref{fig:fano}.
\end{theorem}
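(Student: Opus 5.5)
The plan is to reduce the statement to elementary linear algebra over $\Ftwo$. The parity conditions have already been encoded by the functionals $p_1,p_2,p_3$ in \cref{cor:three-evenness}, \cref{cor:three-bipartite} and \cref{def:z3-bipartite}. For each circuit $C\in\cC(Z)$ of $Z=Z_3(D,\tau)$, I will consider the parity vector
\[
v(C):=\bigl(p_1(C),p_2(C),p_3(C)\bigr)\in\Ftwo^3 .
\]
The condition $P_\beta(D)$ in \eqref{eq:Pbeta} says exactly that $\beta\cdot v(C)=0$ for every circuit $C$, where $\cdot$ is the standard bilinear form on $\Ftwo^3$. Hence
\[
X(D)=\{\beta\in\Ftwo^3:\ \beta\cdot v(C)=0\ \text{for all }C\in\cC(Z)\}
=V^{\perp},
\qquad V:=\operatorname{span}\{v(C):C\in\cC(Z)\}.
\]
An orthogonal complement with respect to a bilinear form is always a subspace, so $X(D)$ is a subspace of $\Ftwo^3$.

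If one prefers to avoid orthogonal complements, I would instead check the subspace axioms directly. First, $000\in X(D)$ because $P_{000}$ is the trivially true condition $0=0$. Second, suppose $\beta,\gamma\in X(D)$ and $C$ is any circuit. Then
\[
(\beta+\gamma)\cdot v(C)=\beta\cdot v(C)+\gamma\cdot v(C)=0+0=0,
\]
so $\beta+\gamma\in X(D)$. Over $\Ftwo$, closure under addition together with containing $0$ is all that is needed.

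The remaining assertion is bookkeeping. Nonzero vectors of $\Ftwo^3$ are in bijection with points of the Fano plane, since each one-dimensional subspace has a unique nonzero vector. So the satisfied points $X(D)\setminus\{000\}$ form the projectivization of $X(D)$. It is empty when $X(D)=0$, a single point when $\dim X(D)=1$, a Fano line when $\dim X(D)=2$, and all seven points when $X(D)=\Ftwo^3$.

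The one point requiring care is that the identification of each nonzero $\beta$ with the corresponding property in \cref{tab:dictionary} uses a single fixed triple $\tau=(T_1,T_2,T_3)$ and a single multimatroid $Z$ throughout. This is supplied by the corollaries cited above: each of them is stated for the same $Z$ and the same $\tau$. Once that is granted, no real obstacle remains, because all of the content lies in those earlier parity characterizations.
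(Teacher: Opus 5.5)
Your proposal is correct and matches the paper's proof: the paper likewise notes that $000\in X(D)$ and adds the two parity equations for $\beta,\gamma\in X(D)$, circuit by circuit, to get closure under addition. Describing $X(D)$ as the orthogonal complement of the span of the vectors $v(C)$ is an equivalent repackaging of the same linear-algebra observation.
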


\begin{proof}
Clearly $0\in X(D)$. Let $\beta,\gamma\in X(D)$. Then, for every
circuit $C$ of $Z_3(D,\tau)$,
\[
\beta_1p_1(C)+\beta_2p_2(C)+\beta_3p_3(C)=0
\]
and
\[
\gamma_1p_1(C)+\gamma_2p_2(C)+\gamma_3p_3(C)=0.
\]
Adding these equalities in $\Ftwo$ gives
\[
(\beta_1+\gamma_1)p_1(C)
+(\beta_2+\gamma_2)p_2(C)
+(\beta_3+\gamma_3)p_3(C)=0.
\]
Hence $\beta+\gamma\in X(D)$. Therefore $X(D)$ contains the zero
vector and is closed under addition, so $X(D)$ is a vector subspace
of $\Ftwo^3$.
\end{proof}

\begin{figure}[ht]
\centering
\begin{tikzpicture}[scale=1.0, every node/.style={font=\small}]
  \coordinate (A) at (0,3.4);
  \coordinate (B) at (-3,0);
  \coordinate (C) at (3,0);
  \coordinate (AB) at (-1.5,1.7);
  \coordinate (AC) at (1.5,1.7);
  \coordinate (BC) at (0,0);
  \coordinate (O) at (0,1.133333); 
  \draw[thick] (A)--(B)--(C)--cycle;
  \draw[thick] (A)--(BC);
  \draw[thick] (B)--(AC);
  \draw[thick] (C)--(AB);
  \draw[thick] (0,1.512) circle[radius=1.512];
  \foreach \P/\lab in {A/001,B/100,C/010,AB/101,AC/011,BC/110,O/111}
    {\fill (\P) circle (2.2pt); \node[fill=white,inner sep=1.5pt] at ($ (\P)+(0,0.28) $) {$\lab$};}
\end{tikzpicture}
\caption{The Fano plane labelled by the seven delta-matroid properties.}\label{fig:fano}
\end{figure}

The standard consequences of \cref{thm:fano-framework} take the following form.

\begin{corollary}[The seven Fano lines]\label{cor:seven-lines}
In each of the following triples, any two properties imply the third:
\begin{align*}
&\{001,010,011\}, &&\{001,100,101\}, &&\{001,110,111\},\\
&\{010,100,110\}, &&\{010,101,111\}, &&\{011,100,111\},\\
&{\{011,101,110\}.}
\end{align*}
Equivalently, these are
\begin{align*}
&\{D\text{ even},\ D^{+}\text{ even},\ D\text{ bipartite}\},\\
&\{D\text{ even},\ D^{*+}\text{ even},\ D^{*}\text{ bipartite}\},\\
&\{D\text{ even},\ D^{+*}\text{ bipartite},\ D\text{ $Z_3$-bipartite}\},\\
&\{D^{+}\text{ even},\ D^{*+}\text{ even},\ D^{+*}\text{ bipartite}\},\\
&\{D^{+}\text{ even},\ D^{*}\text{ bipartite},\ D\text{ $Z_3$-bipartite}\},\\
&\{D\text{ bipartite},\ D^{*+}\text{ even},\ D\text{ $Z_3$-bipartite}\},\\
&\{D\text{ bipartite},\ D^{*}\text{ bipartite},\ D^{+*}\text{ bipartite}\}.
\end{align*}
\end{corollary}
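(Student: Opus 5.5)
The corollary will follow directly from \cref{thm:fano-framework}: three nonzero vectors $\alpha,\beta,\gamma\in\Ftwo^3$ form a Fano line exactly when $\alpha+\beta=\gamma$, equivalently $\alpha+\beta+\gamma=000$. The plan has three steps.

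First, I will check, triple by triple, that each listed set of labels sums to zero in $\Ftwo^3$:
\begin{align*}
&001+010=011, && 001+100=101, && 001+110=111,\\
&010+100=110, && 010+101=111, && 011+100=111,\\
&011+101=110. && &&
\end{align*}
This is a routine coordinatewise computation.

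Second, I will derive the implication. Fix one of the triples $\{\alpha,\beta,\gamma\}$ and suppose two of the properties hold, say $P_\alpha(D)$ and $P_\beta(D)$. Then $\alpha,\beta\in X(D)$. Since $X(D)$ is a subspace by \cref{thm:fano-framework}, also $\alpha+\beta=\gamma\in X(D)$, so $P_\gamma(D)$ holds. The relation $\alpha+\beta+\gamma=0$ is symmetric in the three vectors, so the same argument works whichever two of them are assumed. Alternatively, one can add the two defining parity equations \eqref{eq:Pbeta} for each circuit $C$ of $Z_3(D,\tau)$, exactly as in the proof of the theorem.

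Third, I will justify the second, verbal list. It is obtained by replacing each vector with the property assigned to it in \cref{tab:dictionary}. That assignment is justified by \cref{cor:three-evenness}, \cref{cor:three-bipartite} and \cref{def:z3-bipartite}, so no further argument is needed beyond matching labels.

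I expect no real obstacle here. The only point needing care is bookkeeping: each verbal triple must correspond to the correct vector triple. For example, $110$ is ``$D^{+*}$ bipartite'' and $100$ is ``$D^{*+}$ even'', so the order of the vertex flips must not be confused. I will therefore cross-check each verbal line against the table.
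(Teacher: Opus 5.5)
Your proposal is correct and follows the paper's own proof: a Fano line is a triple of nonzero vectors summing to zero, so closure of the subspace $X(D)$ under addition (\cref{thm:fano-framework}) yields the third property from any two. Your coordinatewise sum checks and the cross-check of each verbal line against \cref{tab:dictionary} spell out bookkeeping that the paper leaves implicit, and all seven of them are accurate.
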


\begin{proof}
Three nonzero vectors form a line of the Fano plane precisely when their sum is zero. If two belong to the subspace $X(D)$, so does their sum, which is the third.
\end{proof}

\begin{corollary}\label{cor:number-properties}
A binary delta-matroid satisfies exactly $0$, $1$, $3$ or $7$ of the seven nontrivial properties.
\end{corollary}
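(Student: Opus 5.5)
By \cref{thm:fano-framework}, $X(D)$ is a subspace of $\Ftwo^3$, so its dimension $k$ is one of $0,1,2,3$. A subspace of dimension $k$ over $\Ftwo$ has $2^k$ elements. The properties satisfied by $D$ are exactly the nonzero vectors of $X(D)$, since $P_{000}$ is the trivial property and each nonzero $\beta$ is one of the seven tabulated properties by \cref{tab:dictionary}. Hence the number of satisfied properties is $2^k-1\in\{0,1,3,7\}$.

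The only point to check is that the dictionary between nonzero $\beta$ and the seven properties is a bijection. Each nonzero $\beta$ appears in exactly one row of \cref{tab:dictionary}, and the equivalences in \cref{cor:three-evenness}, \cref{cor:three-bipartite} and \cref{def:z3-bipartite} show that each listed delta-matroid property coincides with the corresponding condition $P_\beta(D)$. Counting satisfied properties is therefore the same as counting $|X(D)\setminus\{000\}|$, and the corollary follows.

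There is no real obstacle here. The statement does not claim that all four counts are attained, so no examples are needed for this proof.
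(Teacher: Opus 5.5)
Your proposal is correct and follows essentially the same route as the paper: the satisfied properties are the nonzero vectors of the subspace $X(D)$ from \cref{thm:fano-framework}, and a subspace of $\Ftwo^3$ of dimension $k\in\{0,1,2,3\}$ has $2^k-1\in\{0,1,3,7\}$ nonzero vectors. Your explicit check that each nonzero $\beta$ matches exactly one tabulated property is a harmless addition; the paper takes it as already established by \cref{tab:dictionary} and the definition of $P_\beta$.
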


\begin{proof}
A subspace of $\Ftwo^3$ has dimension $0$, $1$, $2$ or $3$, and hence has respectively $0$, $1$, $3$ or $7$ nonzero vectors.
\end{proof}

\begin{corollary}\label{cor:noncollinear}
If a binary delta-matroid satisfies three properties whose vectors are not collinear in the Fano plane, then it satisfies all seven properties.
\end{corollary}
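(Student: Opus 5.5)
The plan is to deduce \cref{cor:noncollinear} from \cref{thm:fano-framework} by a dimension count in $\Ftwo^3$. Suppose $D$ satisfies three properties with vectors $\beta,\gamma,\delta\in\Ftwo^3\setminus\{000\}$ that do not lie on a common Fano line. By \cref{thm:fano-framework}, the set $X(D)$ is a subspace of $\Ftwo^3$ containing $\beta,\gamma,\delta$. It therefore contains their span. Once we know this span is all of $\Ftwo^3$, we get $X(D)=\Ftwo^3$, and all seven nonzero vectors give satisfied properties.

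The key step is to show that three distinct nonzero vectors of $\Ftwo^3$ that are not collinear in the Fano plane are linearly independent. Distinct nonzero vectors in $\Ftwo^3$ are pairwise independent, since the only scalars are $0$ and $1$. So $\beta$ and $\gamma$ span a $2$-dimensional subspace $\{000,\beta,\gamma,\beta+\gamma\}$. Its nonzero elements are exactly the Fano line through $\beta$ and $\gamma$, as used in the proof of \cref{cor:seven-lines}. Non-collinearity means $\delta\neq\beta+\gamma$, and hence $\delta$ is not in this span. Thus $\beta,\gamma,\delta$ span a $3$-dimensional space, which is $\Ftwo^3$.

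An alternative route goes through \cref{cor:number-properties}. There $|X(D)\setminus\{000\}|\in\{0,1,3,7\}$. If the count is $3$, then $X(D)$ is $2$-dimensional and its nonzero vectors form a Fano line, contradicting the hypothesis. So the count must be $7$. There is no real obstacle. The only point that needs care is checking that the three satisfied properties correspond to three distinct nonzero vectors, which holds because \cref{tab:dictionary} is a bijection.
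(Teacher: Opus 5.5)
Your proposal is correct and matches the paper's proof: $X(D)$ is a subspace by \cref{thm:fano-framework}, and three noncollinear Fano points span $\Ftwo^3$, so $X(D)=\Ftwo^3$. The only difference is that you spell out why noncollinear points are independent ($\delta\neq\beta+\gamma$ puts $\delta$ outside $\{000,\beta,\gamma,\beta+\gamma\}$), which the paper states in one line.
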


\begin{proof}
Three noncollinear points span $\Ftwo^3$, and $X(D)$ is a subspace containing them.
\end{proof}

\begin{remark}\label{rem:yan-jin}
Yan and Jin proved that, for a binary even delta-matroid, $D$ is bipartite if and only if $D^{+}$ is even \cite{YanJin2022}. This is the line $\{001,010,011\}$ under the assumption that $001$ holds.
\end{remark}

\section{The ribbon-graphic specialization}
\label{sec:ribbon-specialization}

We now show that the Fano framework for binary delta-matroids
specializes to the graph-embedding framework of Dunshee and
Ellingham \cite{DunsheeEllingham2025}.

Throughout this section, all ribbon graphs are assumed to be
connected and to have at least one edge. The disconnected case may
be recovered componentwise.

\subsection{Ribbon graphs}
\label{subsec:ribbon}

A cellularly embedded graph determines a ribbon graph by taking a
closed regular neighbourhood of the embedded graph. Conversely,
capping each boundary component of a ribbon graph with a disc gives
a cellular embedding in a closed surface.

\begin{definition}[\cite{Bollobas2002}]
		\normalfont
A \emph{ribbon graph} \( G = (V(G), E(G)) \) is a surface with boundary, represented as the union of two sets of topological discs: a set \( V(G) \) of vertices and a set \( E(G) \) of edges, satisfying the following properties:
\begin{enumerate}[label=\textup{(\alph*)}]
\item  The vertices and edges intersect in disjoint line segments.
\item   Each such line segment lies on the boundary of exactly one vertex and exactly one edge.
\item   Every edge contains exactly two such line segments.
\end{enumerate}
\end{definition}

A \emph{spanning ribbon subgraph} of $G$ is obtained by deleting
edges while keeping all vertex-discs. A connected ribbon graph is a \emph{quasi-tree} if it has exactly
one boundary component. A \emph{spanning quasi-tree} of a connected
ribbon graph $G$ is a spanning ribbon subgraph of $G$ that is a
quasi-tree. A ribbon graph is \emph{orientable} if its underlying surface with
boundary is orientable.

We next recall the two ribbon-graph operations corresponding to
twist and loop complementation of delta-matroids.

\begin{definition}[{\cite{Chmutov2009}}]
\normalfont
Let $G$ be a ribbon graph and let $A\subseteq E(G)$. The
\emph{partial dual} of $G$ with respect to $A$, denoted by
$G^{\delta|A}$, is obtained as follows. Attach a disc to every
boundary component of the spanning ribbon subgraph $(V(G),A)$.
These discs become the vertex-discs of $G^{\delta|A}$. Then remove
the interiors of the original vertex-discs, leaving the edge-ribbons
unchanged.
\end{definition}

The \emph{geometric dual} of $G$ is the full partial dual
\[
G^*:=G^{\delta|E(G)}.
\]

\begin{definition}[{\cite{EllisMonaghanMoffatt2012}}]
\normalfont
Let $G$ be a ribbon graph and let $A\subseteq E(G)$. The
\emph{partial Petrial} of $G$ with respect to $A$, denoted by
$G^{\tau|A}$, is obtained by adding a half-twist to every
edge-ribbon in $A$.
\end{definition}

The \emph{Petrial} of $G$ is
\[
G^\times:=G^{\tau|E(G)}.
\]
Recall that, in this paper, compositions are read from left to right; for example,
\[
G^{\delta\tau|A}
=
\bigl(G^{\delta|A}\bigr)^{\tau|A},
\qquad
G^{\tau\delta|A}
=
\bigl(G^{\tau|A}\bigr)^{\delta|A}.
\]

For comparison with the Fano properties, recall that an embedded
graph is \emph{directable} if its edges can be oriented so that every
face boundary is a directed closed walk. It is
\emph{$2$-face-colourable} if its faces admit a proper
$2$-colouring.

\subsection{The ribbon-graphic delta-matroid}
\label{subsec:ribbon-delta}

Let $G$ be a connected ribbon graph and define
\[
\cF(G)
:=
\{A\subseteq E(G):
  A\text{ is the edge set of a spanning quasi-tree of }G\}.
\]

\begin{definition}
\normalfont
The set system
\[
D(G):=(E(G),\cF(G))
\]
is called the \emph{ribbon-graphic delta-matroid} of $G$.
\end{definition}

It is a binary delta-matroid
\cite{Bouchet1989,ChunEtAl2019JCTA}. We shall use the following standard correspondence between
ribbon-graph operations and delta-matroid operations.

\begin{lemma}\cite{ChunEtAl2019JCTA,ChunEtAl2019PLMS}.
\label{lem:ribbon-dictionary}
Let $G$ be a connected ribbon graph and let $A\subseteq E(G)$. Then
\begin{align}
D(G^{\delta|A}) &= D(G)*A,
\label{eq:ribbon-dual}\\
D(G^{\tau|A}) &= D(G)+A.
\end{align} Moreover,
\[
D(G)\text{ is even}
\quad\Longleftrightarrow\quad
G\text{ is orientable}.
\]
\end{lemma}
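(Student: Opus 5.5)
The plan is to prove all three assertions straight from the definition of $\cF(G)$, using only the fact that the number of boundary components of a spanning ribbon subgraph changes in a controlled way under the operations. Throughout, write $f_G(X)$ for the number of boundary components of the spanning ribbon subgraph $(V(G),X)$. Since $G$ is connected, $X\in\cF(G)$ if and only if $(V(G),X)$ is connected and $f_G(X)=1$. A connected ribbon graph with one boundary component is exactly a quasi-tree. So in every step it suffices to track the number of boundary components and connectivity.

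\emph{Partial duality.} First I will show that for every $X\subseteq E(G)$, the ribbon surfaces $(V(G^{\delta|A}),X)$ and $(V(G),X\symdiff A)$ have homeomorphic boundaries. In particular $f_{G^{\delta|A}}(X)=f_G(X\symdiff A)$. This is the standard boundary-tracing description of partial duality. The boundary of a spanning subgraph is traced by arcs lying on vertex-discs and edge-ribbons. The construction of $G^{\delta|A}$ replaces the arcs along the original vertex boundaries by arcs along the boundary of $(V(G),A)$, and this exchanges the roles "edge included" and "edge excluded" precisely for the edges of $A$.

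Connectivity of the two spanning subgraphs also agrees. A quasi-tree of $G^{\delta|A}$ is a single boundary circle together with a connected surface, and the same then holds for $(V(G),X\symdiff A)$. Hence $X\in\cF(G^{\delta|A})$ if and only if $X\symdiff A\in\cF(G)$, which is \eqref{eq:ribbon-dual}. I expect this step to be the main obstacle. The local picture has to be made precise enough to follow boundary curves through each edge of $A$ in both ribbon graphs, and I would do this by checking a single edge $e$ and then inducting on $|A|$, since $G^{\delta|A}$ can be built one edge at a time.

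\emph{Partial Petriality.} Since partial Petrials on distinct edges commute, it suffices to treat $A=\{e\}$. Adding a half-twist to $e$ changes nothing for sets $X$ with $e\notin X$. So the task is to show that for $e\notin X$,
\[
X\cup\{e\}\in\cF(G^{\tau|e})\iff \bigl(X\cup\{e\}\in\cF(G)\bigr)\ \text{xor}\ \bigl(X\in\cF(G)\bigr).
\]
I will split into cases on $f_G(X)$ and on where the two ends of $e$ lie on the boundary of $(V(G),X)$.

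If $X$ is a quasi-tree, then both ends of $e$ lie on its single boundary circle. Of the two ways of attaching the ribbon, one yields two boundary components and the other yields one, so exactly one of $G$ and $G^{\tau|e}$ makes $X\cup\{e\}$ feasible. If $f_G(X)=2$ and the ends of $e$ lie on different components, both attachments merge them into one boundary, so $X\cup\{e\}$ is feasible in both. If $f_G(X)=2$ with both ends on the same component, adding $e$ in either way gives at least $2$ components, so $X\cup\{e\}$ is infeasible in both. If $f_G(X)\ge3$, adding one ribbon lowers the count by at most one, so again $X\cup\{e\}$ is infeasible in both. In each case the displayed equivalence holds, which gives $D(G^{\tau|A})=D(G)+A$.

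\emph{Evenness.} Suppose $G$ is orientable. Every quasi-tree $(V,F)$ is then an orientable surface with one boundary component, so Euler's formula gives
\[
|V|-|F|+1=2-2g,
\]
and the parity of $|F|$ is fixed. Hence $D(G)$ is even.

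Conversely, suppose $G$ is non-orientable. Take a spanning tree $T$, which is a disc and hence feasible. Some edge $e\notin T$ closes an orientation-reversing cycle, and the ribbon of $e$ is attached to the single boundary circle of $T$ with a twist. By the quasi-tree case above, $T\cup\{e\}$ is again a quasi-tree. The feasible sets $T$ and $T\cup\{e\}$ have different parities, so $D(G)$ is not even.
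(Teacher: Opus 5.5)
Your proof is correct in substance, but it takes a different route from the paper. The paper gives no argument for this lemma; it imports all three facts from Chun, Moffatt, Noble and Rueckriemen \cite{ChunEtAl2019JCTA,ChunEtAl2019PLMS}. You instead reduce everything to the single invariant $f_G(X)$:
\begin{itemize}
\item Partial duality exchanges, at every $e\in A$, the roles of the attaching arcs and the long sides of the ribbon, since in $G^{\delta|A}$ the ribbon $e$ is glued to the new vertex discs along its former long sides. So the boundary curves of $(V(G^{\delta|A}),X)$ run exactly like those of $(V(G),X\symdiff A)$.
\item A half-twist on $e$ matters only when both ends of $e$ lie on the same boundary circle.
\item The parity of feasible sets is read off from the Euler characteristic.
\end{itemize}
The citation buys brevity. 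Your argument buys a self-contained, uniform explanation of why the three dictionary entries hold. Because the role-swap is local, you can perform it at all edges of $A$ simultaneously. The induction on $|A|$ you propose would silently rely on Chmutov's composition rule $\bigl(G^{\delta|A}\bigr)^{\delta|B}=G^{\delta|A\symdiff B}$.

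Two small repairs are needed.

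First, your claim that connectivity of $(V(G^{\delta|A}),X)$ and $(V(G),X\symdiff A)$ always agrees is false. Take a single vertex with an untwisted loop $e$, $A=\{e\}$ and $X=\varnothing$. The first spanning subgraph has two components, while the second is connected; both have two boundary components. You do not need this claim. Every component of a ribbon graph has at least one boundary circle, so $f=1$ already forces connectivity, and $X\in\cF(G)$ if and only if $f_G(X)=1$.

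Second, in the non-orientable converse, the quasi-tree case only says that one of the two ways of attaching $e$ to the disc $T$ leaves a single boundary circle; you must say which one. The twisted attachment gives a M\"obius band and the untwisted one an annulus. Equivalently, $T\cup\{e\}$ has $|V|$ vertices and $|V|$ edges, so capping its boundary gives Euler characteristic $f(T\cup\{e\})$. Then $f=2$ would produce a sphere, contradicting the non-orientability of $T\cup\{e\}$.
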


Combining \cref{lem:ribbon-dictionary} with the embedded-graph
equivalences in \cite{DunsheeEllingham2025} yields the first six
identifications in the seven-point Fano framework. We include all
seven points in \cref{tab:ribbon-dictionary}; the final
identification, corresponding to $111$, is established in
\cref{cor:ribbon-111}. Here $\Sigma$ denotes the ambient surface
containing the natural simultaneous embedding of $G$ and its
geometric dual $G^*$.

\begin{table}[ht]
\centering
\caption{The seven Fano points in the ribbon-graphic case.}
\label{tab:ribbon-dictionary}
\begin{tabularx}{\textwidth}{@{}cYY@{}}
\toprule
Point
& Delta-matroid property
& Embedded-graph property\\
\midrule
$001$
& $D(G)$ is even
& $G$ is orientable\\

$010$
& $D(G)^+$ is even
& $G^\times$ is orientable\\

$011$
& $D(G)$ is bipartite
& $G$ is bipartite\\

$100$
& $D(G)^{*+}$ is even
& $G^{*\times}$ is orientable, equivalently $G$ is directable\\

$101$
& $D(G)^*$ is bipartite
& $G^*$ is bipartite, equivalently $G$ is $2$-face-colourable\\

$110$
& $D(G)^{+*}$ is bipartite
& $G^{\times *}$ is bipartite, equivalently the regions of
  $\Sigma\setminus(G\cup G^*)$ are $2$-colourable.\\

{$111$}
& {$D(G)$ is $Z_3$-bipartite}
& {the medial graph $G_m$ of $G$ is bipartite.}\\
\bottomrule
\end{tabularx}
\end{table}

\subsection{The medial graph and transitions}
\label{subsec:transition}

A \emph{corner} of a ribbon graph is determined by two consecutive
edge-ends on the boundary of a vertex-disc.

\begin{definition}[{\cite{EllisMonaghanMoffatt2012}}]
\normalfont
Let $G$ be a connected ribbon graph. Its \emph{medial graph}
$G_m$ is obtained by placing a vertex $v_e$ on each edge
$e\in E(G)$ and joining these vertices by following the boundary
of $G$ through its corners.
\end{definition}

Each edge of $G$ is incident with four corners. Consequently every
vertex of $G_m$ has degree $4$. Since $G$ is connected, $G_m$ is
also connected. Hence $G_m$ is a connected $4$-regular graph, and
there is a natural bijection
\[
E(G)\longleftrightarrow V(G_m),
\qquad
e\longmapsto v_e.
\]
The medial graph $G_m$ has a canonical checkerboard colouring:
the faces containing the vertex-discs of $G$ are coloured black
and the remaining faces white.

Let $F$ be a $4$-regular graph and let $v\in V(F)$. A
\emph{transition at $v$} is a partition of the four half-edges
incident with $v$ into two pairs. There are exactly three transitions at each vertex. Let
$\mathfrak T(F)$ denote the set of all transitions of $F$, and let $\omega_v$
denote the set of the three transitions at $v$. Then
\[
\Omega(F):=\{\omega_v:v\in V(F)\}
\]
is a partition of $\mathfrak T(F)$ into $3$-element skew classes.

\subsection{The transition $3$-matroid}
\label{subsec:transition-matroid}
We begin with a $4$-regular graph to construct a $3$-matroid.
Let $F$ be a connected $4$-regular graph. Fix an Euler tour
$\Gamma$ of $F$, that is, a closed trail traversing every edge
exactly once. Each vertex occurs twice in the cyclic vertex
sequence of $\Gamma$.

Two distinct vertices $u,v\in V(F)$ are \emph{interlaced with
respect to $\Gamma$} if their occurrences alternate along
$\Gamma$. The \emph{interlacement graph} $I(\Gamma)$ is the simple
graph on $V(F)$ in which two vertices are adjacent precisely when
they are interlaced. Let
\[
A:=A(I(\Gamma))
\]
be its adjacency matrix over $\Ftwo$. Thus
\[
A=A^{\tr}
\qquad\text{and}\qquad
A_{vv}=0
\quad
\text{for every }v\in V(F).
\]

Temporarily orient $\Gamma$. At each vertex $v$, the tour $\Gamma$ passes through $v$
twice. Let $a_1,b_1,a_2,b_2$ be the four half-edges incident with
$v$, labelled so that the two directed passages through $v$ are
\[
a_1\,v\,b_1,
\qquad
a_2\,v\,b_2.
\]
Following Traldi \cite[Definition~12]{Traldi2015}, define
\[
\begin{aligned}
\phi_\Gamma(v)
&=\{\{a_1,b_1\},\{a_2,b_2\}\},\\
\kappa_\Gamma(v)
&=\{\{a_1,b_2\},\{a_2,b_1\}\},\\
\psi_\Gamma(v)
&=\{\{a_1,a_2\},\{b_1,b_2\}\}.
\end{aligned}
\]

Put
\[
T_\phi:=\{\phi_\Gamma(v):v\in V(F)\},\qquad
T_\kappa:=\{\kappa_\Gamma(v):v\in V(F)\},\]
\[
T_\psi:=\{\psi_\Gamma(v):v\in V(F)\},
\]
and let
\[
\tau_\Gamma:=(T_\phi,T_\kappa,T_\psi).
\]

The \emph{transition matroid} of $F$, denoted by $M_\tau(F)$, is
the binary matroid on $\mathfrak T(F)$ represented by
\[
R_\Gamma
=\bordermatrix{
    &T_\phi & T_\kappa & T_\psi \cr
    & I_E   & A   & A+I_E
}.
\]
Although $R_\Gamma$ depends on the chosen Euler tour $\Gamma$,
the matroid it represents on $\mathfrak T(F)$ does not
\cite[Proposition~15]{Traldi2015}. We denote this matroid by
$M_\tau(F)$.

For a transition transversal $T\in\cT(\Omega(F))$, the transitions selected by $T$ join the edges of $F$ into edge-disjoint
closed trails whose edge sets partition $E(F)$. Let $P_T$ denote
this circuit partition. Thus $|P_T|$ is the number of closed trails
in the partition, and $|P_T|=1$ exactly when the selected
transitions join all edges into a single closed trails of $F$.

Since $A$ is symmetric over $\Ftwo$, apply
\cref{lem:compatible-shelter} to $D(A)$ and the transversal triple
$\tau_\Gamma$. The transition matroid
$M_\tau(F)$ shelters $Z_3(D(A),\tau_\Gamma)$, and we set
\[
Z(F):=Z_3(D(A),\tau_\Gamma).
\]
Thus
\[
Z(F)[T]=M_\tau(F)|T
\qquad
\text{for every }T\in\cT(\Omega(F)).
\]
For later use, Traldi's circuit-nullity formula \cite[Theorem~1]{Traldi2015} gives
\begin{equation}
\label{eq:transition-basis}
T\in\mathcal B(Z(F))
\quad\Longleftrightarrow\quad
|P_T|=1
\qquad
\text{for every }T\in\cT(\Omega(F)),
\end{equation}
because $F$ is connected.

Let $G_m$ be the medial graph of a connected ribbon graph $G$.
For $e\in E(G)$, let
\[
b_e:=b(v_e),\qquad
w_e:=w(v_e),\qquad
c_e:=c(v_e)
\]
denote, respectively, the black smoothing, the white smoothing,
and the crossing at the corresponding medial vertex $v_e$. Put
\[
T_b:=\{b_e:e\in E(G)\},\qquad
T_w:=\{w_e:e\in E(G)\},\qquad
T_c:=\{c_e:e\in E(G)\},
\]
and set
\[
\tau_G:=(T_b,T_w,T_c).
\]

Using the natural bijection
\[
E(G)\longleftrightarrow V(G_m),
\qquad
e\longmapsto v_e,
\]
we identify the skew class of $Z_3(D(G),\tau_G)$ indexed by $e$
with
\[
\omega_{v_e}=\{b_e,w_e,c_e\}.
\]
Thus $Z_3(D(G),\tau_G)$ and $Z(G_m)$ are regarded as
$3$-matroids on the same carrier
\[
(\mathfrak T(G_m),\Omega(G_m)).
\]

For an ordered partition
\[
E(G)=X\cup Y\cup W,
\]
define
\[
T(X,Y,W)
:=
\{w_e:e\in X\}
\cup
\{b_e:e\in Y\}
\cup
\{c_e:e\in W\}.
\]
The three choices correspond, respectively, to retaining an edge,
deleting it, and retaining it after adding a half-twist. Hence the
corresponding ribbon-graph state is
\[
G^{\tau|W}\setminus Y.
\]
At every edge, its boundary arcs are paired in the same way as the
medial half-edges prescribed by $T(X,Y,W)$. Therefore the boundary
components of $G^{\tau|W}\setminus Y$ are naturally in bijection
with the members of the circuit partition
$P_{T(X,Y,W)}$.
\begin{lemma}
\label{lem:medial-z3}
Let $G$ be a connected ribbon graph. Under the identification
above,
\[
Z_3(D(G),\tau_G)=Z(G_m).
\]
\end{lemma}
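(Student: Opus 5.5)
Both sides are tight $3$-matroids on the same carrier $(\mathfrak T(G_m),\Omega(G_m))$, and both are determined by a delta-matroid together with an ordered transversal triple. The plan is to show that $Z(G_m)$, read through the triple $\tau_G$, yields exactly $D(G)$, i.e.\ $D_{Z(G_m),\tau_G}=D(G)$. The uniqueness part of the Brijder--Hoogeboom correspondence (the sentence defining $Z_3(D,\tau)$) then gives $Z(G_m)=Z_3(D(G),\tau_G)$ at once. For this we need that $Z(G_m)$ is a tight $3$-matroid. By construction $Z(G_m)=Z_3(D(A),\tau_\Gamma)$ for the interlacement matrix $A$ of an Euler tour $\Gamma$ of $G_m$, so it is tight by definition; only the triple changes when we pass from $\tau_\Gamma$ to $\tau_G$.

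The main step is to compute $D_{Z(G_m),\tau_G}$ directly from bases. By definition, $X\subseteq E(G)$ is feasible if and only if
\[
T_b(E\setminus X)\cup T_w(X)=T(X,E\setminus X,\varnothing)
\]
is a basis of $Z(G_m)-T_c$. First I check that this transversal is a basis of $Z(G_m)-T_c$ exactly when it is a basis of $Z(G_m)$. It is a transversal of the restricted carrier and lies in the underlying set, so being a basis of $Z(G_m)$ makes it a basis of the restriction. Conversely, in a tight $3$-matroid every basis is a transversal, and restrictions of multimatroids have full-rank bases on transversals. Hence a transversal of the restriction that is a basis there is a maximal independent transversal of $Z(G_m)$, so it is a basis of $Z(G_m)$.

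By \eqref{eq:transition-basis}, $T(X,E\setminus X,\varnothing)$ is a basis exactly when $|P_{T(X,E\setminus X,\varnothing)}|=1$. By the discussion just before the lemma, the members of this circuit partition correspond bijectively to the boundary components of the ribbon graph $G\setminus(E\setminus X)$, which is the spanning ribbon subgraph $(V(G),X)$. Having exactly one boundary component means $(V(G),X)$ is a spanning quasi-tree, since that subgraph is connected whenever it has one boundary component. So $X$ is feasible iff $X\in\cF(G)$, which proves $D_{Z(G_m),\tau_G}=D(G)$.

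The main obstacle is justifying the boundary--circuit-partition bijection rigorously. At each edge $e$ one must check that the white smoothing $w_e$ pairs the medial half-edges as the boundary of $G$ runs along the retained ribbon $e$. Likewise the black smoothing $b_e$ must pair them as the boundary runs along the vertex discs after $e$ is deleted, and the crossing $c_e$ must match a half-twisted ribbon. The first two cases are the only ones needed here. I would verify them locally at a single edge from the checkerboard colouring: black faces contain the vertex discs, so the black smoothing separates the two ends of $e$ along vertex boundaries. Outside the edges, the medial graph's edges follow the boundary through the corners. Gluing these local pictures then gives the bijection of closed curves. A secondary point is that $\tau_G$ need not coincide with $\tau_\Gamma$. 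This is harmless, because $Z(G_m)$ is independent of $\Gamma$ by Traldi's result, and the argument above never uses $\tau_\Gamma$ except to know that $Z(G_m)$ is tight.
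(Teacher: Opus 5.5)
Your proof is correct, but it takes a different route from the paper's. The paper compares bases over all $3^{|E(G)|}$ transversals. For every ordered partition $E(G)=X\cup Y\cup W$ it cites \cite[Theorem~6.3]{MerinoMoffattNoble2025}: $T(X,Y,W)$ is a basis of $Z_3(D(G),\tau_G)$ if and only if $G^{\tau|W}\setminus Y$ has one boundary component. It then converts this into $|P_T|=1$ and applies \eqref{eq:transition-basis}. You instead look only at the transversals with $W=\varnothing$. From these you show $D_{Z(G_m),\tau_G}=D(G)$, via the chain spanning quasi-tree, one boundary component, $|P_T|=1$. You then close the argument with the Brijder--Hoogeboom uniqueness \cite{BrijderHoogeboom2014}, which applies because $Z(G_m)=Z_3(D(A),\tau_\Gamma)$ is tight by construction and lives on the same carrier.

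Your route needs no external description of the bases of $Z_3(D(G),\tau_G)$. It also uses only the retain/delete half of the boundary--circuit-partition correspondence (the smoothings $w_e$ and $b_e$) and never the half-twist/crossing case. The bases involving crossings are then inherited from \eqref{eq:transition-basis}, so you essentially recover the Merino--Moffatt--Noble characterization as a by-product. The paper's route, in exchange, does not invoke the uniqueness theorem or tightness of $Z(G_m)$: it identifies the two $3$-matroids directly through their full basis families, at the cost of importing the ribbon-state theorem for all three local choices.

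Two small remarks on your write-up. First, your step that a transversal contained in $T_b\cup T_w$ is a basis of $Z(G_m)-T_c$ if and only if it is a basis of $Z(G_m)$ is sound, but it is simpler than you make it: for a transversal, both conditions just mean independence in $Z(G_m)$. Second, the appeal to Traldi's $\Gamma$-independence is unnecessary, since \eqref{eq:transition-basis} already describes the bases intrinsically.
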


\begin{proof}
For the ordered partition $E(G)=X\cup Y\cup W$, put
$T=T(X,Y,W)$. By
\cite[Theorem~6.3]{MerinoMoffattNoble2025}, the transversal $T$ is
a basis of $Z_3(D(G),\tau_G)$ if and only if the ribbon-graph state
$G^{\tau|W}\setminus Y$ has one boundary component. By above, this is equivalent to
$|P_T|=1$. By \eqref{eq:transition-basis}, the latter condition is
equivalent to $T\in\mathcal B(Z(G_m))$. Hence $Z_3(D(G),\tau_G)$ and $Z(G_m)$ have the same bases.
Therefore, as $3$-matroids on the same carrier,
\[Z_3(D(G),\tau_G)=Z(G_m).\]
\end{proof}

It remains to determine when every circuit of $Z(F)$ has even
cardinality.

\subsection{Bipartiteness of connected $4$-regular graphs}
\label{subsec:4regular}
We first relate bipartiteness of $F$ to the vertex degrees of its
interlacement graph.

\begin{lemma}
\label{lem:odd-interlacement-degrees}
Let $F$ be a connected $4$-regular graph, let $\Gamma$ be an Euler
tour of $F$, and let
\[
H=I(\Gamma).
\]
Then
\[
F\text{ is bipartite}
\quad\Longleftrightarrow\quad
\deg_H(v)\text{ is odd for every }v\in V(F).
\]
\end{lemma}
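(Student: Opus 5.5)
The plan is to compute the parity of the closed walk of $\Gamma$ between the two occurrences of each vertex, and then use the standard fact that a connected graph is bipartite if and only if every closed walk in a generating family of its cycle space has even length.

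\textbf{Step 1: the half-tours.} Write the Euler tour cyclically as
\[
\Gamma = v\,W_1\,v\,W_2 .
\]
Splitting $\Gamma$ at the two occurrences of $v$ gives two closed walks $\Gamma_v^{(1)}$ and $\Gamma_v^{(2)}$ through $v$. Their lengths add up to $|E(F)| = 2|V(F)|$, which is even. So the two lengths have the same parity, and it suffices to determine the parity of $\ell_v := |\Gamma_v^{(1)}|$.

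\textbf{Step 2: counting $\ell_v$.} The length $\ell_v$ equals the number of vertex occurrences in the open segment $W_1$, plus one. Every vertex $u \neq v$ occurs twice in $\Gamma$. It has exactly one occurrence in $W_1$ precisely when $u$ is interlaced with $v$. Otherwise it has $0$ or $2$ occurrences in $W_1$. The vertex $v$ itself does not occur in $W_1$. Hence
\[
\ell_v \equiv 1 + \deg_H(v) \pmod 2 .
\]
This step is the crux, and I expect it to be the main obstacle, though only a bookkeeping one. One must check the convention that an edge loop at $v$ gives $W_1$ empty and $\ell_v = 1$. One must also check that multiple edges cause no trouble; this holds because lengths count edges and the occurrence count is unaffected.

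\textbf{Step 3: forward direction.} If $F$ is bipartite, every closed walk has even length. So every $\ell_v$ is even, and by Step 2 every $\deg_H(v)$ is odd.

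\textbf{Step 4: converse.} Suppose every $\deg_H(v)$ is odd, so every half-tour $\Gamma_v^{(i)}$ has even length. I show the edge sets of these half-tours span the cycle space of $F$ over $\Ftwo$. This suffices: each edge set has even cardinality, so every cycle, including every circuit, is then even, and $F$ is bipartite.

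To prove the spanning claim, take any cycle $C$ of $F$ and a vertex $v$ on $C$. Let $e, f$ be the two edges of $C$ at $v$. If $e$ and $f$ lie in different halves at $v$, replace $C$ by $C \symdiff \Gamma_v^{(1)}$. I would organise this as an induction, but a cleaner route is a direct argument.

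Suppose some cycle $C$ of odd length exists. Orient $\Gamma$ and 2-colour each edge according to the parity of its position in $\Gamma$. Because $|E(F)|$ is even, this colouring is consistent around the cyclic tour. At each vertex occurrence, consecutive edges of the tour get opposite colours. The evenness of $\ell_v$ makes the two passages through $v$ compatible: the position parities at the two occurrences of $v$ agree. Consequently, the two edges whose tour positions are even (entering at one passage and leaving at the other) can be used to define a proper 2-colouring of the vertices. Concretely, colour $v$ by the parity of the position of $v$'s first occurrence in $\Gamma$. This is well defined since $\ell_v$ is even, and adjacent vertices along $\Gamma$ receive opposite parities. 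Every edge of $F$ joins consecutive occurrences in $\Gamma$, so this vertex colouring is proper, and $F$ is bipartite. This direct parity labelling is the approach I would write up, with Step 2 as its key lemma.
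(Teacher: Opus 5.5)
Your proposal is correct and, once the abandoned cycle-space detour is set aside, it is essentially the paper's argument: you show that a half-tour at $v$ has length $\ell_v\equiv 1+\deg_H(v)\pmod 2$, and you properly $2$-colour $V(F)$ by the parity of tour positions, which is well defined because $\ell_v$ is even and consistent around the tour because $|E(F)|=2|V(F)|$ is even. In a write-up, drop the unproved spanning claim, the unused ``suppose some cycle $C$ of odd length exists,'' and the edge-colouring sentences, since the ``Concretely'' sentence alone carries the converse; the parenthetical ``entering at one passage and leaving at the other'' is also inaccurate, because the two entering edges at $v$ have positions of the same parity.
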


\begin{proof}
Fix $v\in V(F)$. The two occurrences of $v$ in the cyclic vertex
sequence of $\Gamma$ divide $\Gamma$ into two closed
$v$--$v$ subtrails. Choose one of them, and let $k$ be the number
of vertex occurrences strictly between its two occurrences of
$v$. This subtrail has length $k+1$.

For $w\neq v$, the two occurrences of $w$ contribute an odd number
to $k$ precisely when exactly one of them lies strictly between the
two chosen occurrences of $v$. This happens exactly when $w$ is
interlaced with $v$. Consequently,
\[
k\equiv\deg_H(v)\pmod 2,
\]
and hence the length of the chosen closed subtrail is congruent to
\[
\deg_H(v)+1\pmod 2.
\]

Suppose first that $F$ is bipartite. Every closed trail in a
bipartite graph has even length, so
\[
\deg_H(v)+1\equiv0\pmod2.
\]
Thus every vertex of $H$ has odd degree.

Conversely, suppose that every vertex of $H$ has odd degree. Write
the Euler tour cyclically as
\[
\Gamma
=
v_0e_1v_1e_2\cdots e_{m-1}v_{m-1}e_mv_0,
\]
where $m=|E(F)|$.
Since $F$ is $4$-regular, $m=2|V(F)|$ is even.

Assign colour $i\pmod2$ to the occurrence of $v_i$ in the cyclic
sequence, for $0\leq i<m$. We show that this gives a well-defined
colouring of the vertices of $F$.

Suppose that a vertex $v$ occurs at positions $i$ and $j$, with
$i<j$. The subtrail from the occurrence at position $i$ to the
occurrence at position $j$ has length $j-i$. By the parity
calculation above,
\[
j-i
\equiv
\deg_H(v)+1
\equiv0
\pmod2.
\]
Thus $i$ and $j$ have the same parity, so the two occurrences of
$v$ receive the same colour.

Every edge of $F$ occurs on $\Gamma$ between two consecutive vertex
occurrences, whose positions have opposite parity. Hence the two
ends of every edge receive different colours. Therefore there is a
proper $2$-colouring of $F$, and $F$ is bipartite.
\end{proof}

We now characterize bipartiteness in terms of the transition
$3$-matroid.

\begin{theorem}
\label{thm:transition-bipartite}
Let $F$ be a connected $4$-regular graph. Then
\[
F\text{ is bipartite}
\quad\Longleftrightarrow\quad
\text{every circuit of }Z(F)\text{ has even cardinality}.
\]
\end{theorem}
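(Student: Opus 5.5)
We prove the statement by combining the sheltering matroid with \cref{lem:odd-interlacement-degrees}, in the same way as the proof of \cref{thm:bipartite-parity}. Fix an Euler tour $\Gamma$ of $F$ and let $A=A(I(\Gamma))$, so $A_{vv}=0$ for every $v$. By construction $Z(F)=Z_3(D(A),\tau_\Gamma)$ is sheltered by $M_\tau(F)$, represented by $R_\Gamma=(I\mid A\mid A+I)$. By \cref{prop:shelter-cycles}, the circuits of $Z(F)$ are exactly the subtransversal circuits of $M_\tau(F)$. Since every circuit is a subtransversal of $U=T_\phi\cup T_\kappa\cup T_\psi$, the condition that every circuit has even cardinality is the same as $|C\cap U|\equiv 0$ for every circuit $C$.

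For the forward direction, assume $F$ is bipartite. By \cref{lem:odd-interlacement-degrees}, every row of $A$ has odd weight. Let $x=\ind_{V(F)}$ be the all-ones row vector. Then:
\begin{itemize}
\item $xI=\ind$;
\item $xA=\ind$, since each column of the symmetric matrix $A$ has odd weight;
\item $x(A+I)=\ind+\ind=0$.
\end{itemize}
Hence $xR_\Gamma=\chi_{T_\phi\cup T_\kappa}$. For any circuit $C$ of $Z(F)$, the columns indexed by $C$ sum to zero, exactly as in \eqref{eq:circuit-column-sum}. This gives $|C\cap(T_\phi\cup T_\kappa)|\equiv0$.

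This is not yet parity of $|C|$, because the $T_\psi$ part is still missing. I would close the gap by showing that every circuit also meets $T_\psi$ evenly. By \cref{thm:evenness-parity}, this holds when $D(A)$ is even. $D(A)$ is indeed even: $A$ is symmetric with zero diagonal over $\Ftwo$, so every nonsingular principal submatrix is an alternating matrix and therefore has even size. Thus $p_3(C)=0$ for every circuit $C$. Adding this to $p_1(C)+p_2(C)=0$ gives $|C|$ even.

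An equivalent route uses \cref{cor:three-bipartite}: the row-vector identity says that $D(A)^{+*}$ is bipartite, and combining this with evenness gives the $111$ point via the Fano line $\{001,110,111\}$ of \cref{cor:seven-lines}.

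For the converse, assume every circuit of $Z(F)$ is even. We exhibit explicit fundamental circuits, imitating \eqref{eq:Ke}. For $v\in V(F)$, take the column of $T_\kappa$ at $v$, which is $A_v$; its $v$-th entry is $0$. Its fundamental circuit with respect to the identity block $T_\phi$ is
\[
K_v=\{\kappa_\Gamma(v)\}\cup\{\phi_\Gamma(w):A_{wv}=1\}.
\]
This set is a subtransversal, because it does not contain $\phi_\Gamma(v)$, so $K_v$ is a circuit of $Z(F)$. Its cardinality is $1+\deg_H(v)$. Evenness of $|K_v|$ therefore forces $\deg_H(v)$ to be odd for every $v$, and \cref{lem:odd-interlacement-degrees} gives that $F$ is bipartite.

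The main obstacle is the forward direction, where the single row-vector identity controls only $T_\phi\cup T_\kappa$. That is why the separate evenness of $D(A)$ is needed. The key point to verify is that zero-diagonal symmetric matrices over $\Ftwo$ have only even-sized nonsingular principal submatrices, which holds because alternating forms have even rank.
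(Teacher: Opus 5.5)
Your proof is correct. The converse direction and the $T_\phi\cup T_\kappa$ half of the forward direction are exactly the paper's argument: the converse uses the fundamental circuits $K_v$ and \cref{lem:odd-interlacement-degrees}, and the forward half uses $\ind A=\ind$, so that $\ind R_\Gamma=\chi_{T_\phi\cup T_\kappa}$.

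The only genuine difference is how you show that every circuit meets $T_\psi$ evenly. You first note that $D(A)$ is even, because a symmetric zero-diagonal matrix over $\Ftwo$ is alternating and so has even rank. You then invoke \cref{thm:evenness-parity} with $T_3=T_\psi$. This is valid and not circular: \cref{thm:evenness-parity} rests only on the external \cref{lem:tight-even} and \cref{lem:brijder-orienting}, and $Z(F)$ is binary since $M_\tau(F)$ shelters it.

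The paper instead proves this step by direct linear algebra. It writes a circuit's column dependence as $x+Ay+(A+I)z=0$ and puts $w=y+z$. Multiplying on the left by $w^{\tr}$ and using $w^{\tr}x=0$, $w^{\tr}Aw=0$ (zero diagonal) and $w^{\tr}z=|X_\psi|$ gives $|X_\psi|$ even.

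Both routes use only the zero diagonal of $A$ for this step, not bipartiteness. The paper's computation is self-contained; in effect it reproves, for zero-diagonal $A$, the implication ``$D(A)$ even $\Rightarrow$ $T_\psi$ orienting'' that you import from Bouchet and Brijder. Your version is shorter and makes explicit that the forward direction is just the Fano line $\{001,110,111\}$ of \cref{cor:seven-lines} applied to $D(A)$. The price is dependence on the heavier external orienting-transversal theorem.
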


\begin{proof}
Let $\Gamma$ be an Euler tour of $F$, let
\[
H:=I(\Gamma),
\qquad
A:=A(H),
\]
and retain the matrix $R_\Gamma$ from
\cref{subsec:transition-matroid}. By the definition of $Z(F)$,
the transition matroid $M_\tau(F)$ represented by $R_\Gamma$
shelters $Z(F)$.

Suppose first that every circuit of $Z(F)$ has even cardinality.
Fix $v\in V(F)$. The identity block of $R_\Gamma$ is a basis of
$M_\tau(F)$, and the fundamental circuit of the column indexed by
$\kappa_\Gamma(v)$ with respect to this basis is
\[
K_v
=
\{\kappa_\Gamma(v)\}
\cup
\{\phi_\Gamma(u):A_{uv}=1\}.
\]
Since $A_{vv}=0$, $K_v$ does not contain $\phi_\Gamma(v)$ and therefore
contains at most one transition from each skew class. Thus $K_v$
is a subtransversal.

The set $K_v$ is a circuit of $M_\tau(F)$, so
\cref{prop:shelter-cycles} implies that it is a circuit of $Z(F)$.
Consequently,
\[
0
\equiv
|K_v|
=
1+\deg_H(v)
\pmod2.
\]
Thus every vertex of $H$ has odd degree. By
\cref{lem:odd-interlacement-degrees}, $F$ is bipartite.

Conversely, suppose that $F$ is bipartite. By
\cref{lem:odd-interlacement-degrees}, every vertex of $H$ has odd
degree. Hence every column of $A$ has odd weight.
Let $\mathbf1$ denote the all-ones row vector indexed by $V(F)$.
Then
\begin{equation}
\label{eq:ones-A}
\mathbf1 A=\mathbf1.
\end{equation}
Let $K$ be a circuit of $Z(F)$, and define
\[
X_\phi
:=
\{v\in V(F):\phi_\Gamma(v)\in K\}, \qquad X_\kappa
:=
\{v\in V(F):\kappa_\Gamma(v)\in K\},
\]
\[
X_\psi
:=
\{v\in V(F):\psi_\Gamma(v)\in K\}.
\]
Let
\[
x:=\chi_{X_\phi}^{\tr},
\qquad
y:=\chi_{X_\kappa}^{\tr},
\qquad
z:=\chi_{X_\psi}^{\tr}.
\]
Since $K$ is a subtransversal, the sets
\[
X_\phi,\qquad X_\kappa,\qquad X_\psi
\]
are pairwise disjoint.

By \cref{prop:shelter-cycles}, $K$ is also a circuit of
$M_\tau(F)$. Since $M_\tau(F)$ is binary, the columns of
$R_\Gamma$ indexed by $K$ sum to zero. Hence
\begin{equation}
\label{eq:xyz-dependence}
x+Ay+(A+I_E)z=0.
\end{equation}

Put
\[
w:=y+z.
\]
Then \eqref{eq:xyz-dependence} becomes
\[
x+Aw+z=0.
\]
Multiplying on the left by $w^{\tr}$ gives
\[
w^{\tr}x+w^{\tr}Aw+w^{\tr}z=0.
\]

The supports of $w$ and $x$ are disjoint, so
\[
w^{\tr}x=0.
\]
Since $A$ is symmetric with zero diagonal,
\[
w^{\tr}Aw=0
\]
over $\Ftwo$. Moreover, the supports of $y$ and $z$ are disjoint,
and hence
\[
w^{\tr}z
=
(y+z)^{\tr}z
=
|X_\psi|
\pmod2.
\]
It follows that
\[
|X_\psi|\equiv0\pmod2.
\]

Now multiply \eqref{eq:xyz-dependence} on the left by $\mathbf1$.
Using \eqref{eq:ones-A}, we obtain
\[
\begin{aligned}
0
&=
\mathbf1 x
+\mathbf1 Ay
+\mathbf1(A+I)z\\
&=
|X_\phi|+|X_\kappa|
\pmod2.
\end{aligned}
\]
Together with the evenness of $|X_\psi|$, this gives
\[
|K|
=
|X_\phi|+|X_\kappa|+|X_\psi|
\equiv0\pmod2.
\]
Therefore every circuit of $Z(F)$ has even cardinality.
\end{proof}

\begin{corollary}
\label{cor:ribbon-111}
Let $G$ be a connected ribbon graph. Then
\[
D(G)\text{ is $Z_3$-bipartite}
\quad\Longleftrightarrow\quad
G_m\text{ is bipartite}.
\]
\end{corollary}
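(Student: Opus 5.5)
The corollary is a direct composition of \cref{lem:medial-z3} and \cref{thm:transition-bipartite}. I would start from \cref{def:z3-bipartite}, read with the triple $\tau_G=(T_b,T_w,T_c)$ of \cref{subsec:transition-matroid}: $D(G)$ is $Z_3$-bipartite exactly when every circuit of $Z_3(D(G),\tau_G)$ has even cardinality. \cref{lem:medial-z3} identifies this $3$-matroid with $Z(G_m)$ on the carrier $(\mathfrak T(G_m),\Omega(G_m))$. Circuits are defined purely in terms of independent subtransversals, so the two $3$-matroids have the same circuits, with the same cardinalities.

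Since $G$ is connected with at least one edge, $G_m$ is a connected $4$-regular graph, so \cref{thm:transition-bipartite} applies to $F=G_m$. It states that every circuit of $Z(G_m)$ is even if and only if $G_m$ is bipartite. Chaining the two equivalences gives the corollary.

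The only point needing care is that \cref{def:z3-bipartite} must not depend on which transversal triple is used to build $Z_3(D,\tau)$. The cardinality of a circuit is intrinsic to the underlying $3$-matroid, and changing the labels of the three transversals does not change it. So the check reduces to confirming that $\tau_G$ is a legitimate triple, that is, that $D(G)=D_{Z,\tau_G}$ for $Z=Z_3(D(G),\tau_G)$. I would verify this via the state description: $T_1(E\setminus X)\cup T_2(X)=T(E\setminus X,X,\varnothing)$ corresponds to $G\setminus X$ having one boundary component. This needs the complement convention to match the quasi-tree definition of $\cF(G)$, which I expect to be fixed by the reference \cite[Theorem~6.3]{MerinoMoffattNoble2025} already used in \cref{lem:medial-z3}. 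This convention check is the only real obstacle; the rest is immediate.
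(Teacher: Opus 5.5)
Your argument is the paper's proof: \cref{lem:medial-z3} identifies $Z_3(D(G),\tau_G)$ with $Z(G_m)$, and \cref{thm:transition-bipartite} applied to $F=G_m$ finishes it. The convention check you single out is not a real obstacle, for three reasons. First, $D(G)=D_{Z,\tau_G}$ holds by the very definition of $Z_3(D(G),\tau_G)$. Second, your slot assignment is reversed: with $T_1=T_b$ and $T_2=T_w$ one gets $T_1(E\setminus X)\cup T_2(X)=T(X,E\setminus X,\varnothing)$, whose state is the spanning subgraph with edge set $X$, as required. Third, even a global-twist discrepancy would be harmless, since it only relabels the triple and leaves circuit cardinalities unchanged.
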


\begin{proof}
By \cref{lem:medial-z3},
\[
Z_3(D(G),\tau_G)\cong Z(G_m).
\]
Therefore
\[
\begin{aligned}
D(G)\text{ is $Z_3$-bipartite}
&\iff
\text{every circuit of }Z(G_m)\text{ has even cardinality}\\
&\iff
G_m\text{ is bipartite},
\end{aligned}
\]
where the second equivalence follows from
\cref{thm:transition-bipartite}.
\end{proof}

\section{A vf-safe counterexample}
\label{sec:vfsafe-counterexample}

This resolves the binary case of
Problem~\ref{prob:dunshee-ellingham}: the tight $3$-matroid
$Z_3(D,\tau)$ provides a common parity model for the seven
properties, and their Fano structure follows from linear algebra
over $\Ftwo$. The binary hypothesis cannot in general be replaced
by vf-safe. As the following example shows, even the
Fano-line implication on $\{011,101,110\}$ fails for a vf-safe
delta-matroid.

\begin{example}
\label{ex:vfsafe-counterexample}
Let $M=U_{2,4}$ on $E=\{1,2,3,4\},$
regarded as a delta-matroid whose feasible sets are the
$2$-subsets of $E$. The matroid $U_{2,4}$ is quaternary and hence
vf-safe \cite{BrijderHoogeboomQuaternary}. Define, with vertex
flips composed from left to right,
\[
D:=M*1+1*2+3.
\]
Using the shorthand $14=\{1,4\}$, a direct calculation gives
\[
\mathcal F(D)
=
\{
\varnothing,1,3,14,23,24,123,124,134,234
\}.
\]

The minimum feasible sets of the three relevant delta-matroids are
\[
\mathcal F_{\min}(D)=\{\varnothing\},
\]
and
\[
\mathcal F_{\min}(D^*)
=
\mathcal F_{\min}(D^{+*})
=
\bigl\{
\{1\},\{2\},\{3\},\{4\}
\bigr\}.
\]
Consequently,
\[
D_{\min}=U_{0,4},
\qquad
(D^*)_{\min}=(D^{+*})_{\min}=U_{1,4}.
\]

The circuits of $U_{0,4}$ are the singletons, so $D$ is not
bipartite. The circuits of $U_{1,4}$ are the $2$-subsets of $E$,
so both $D^*$ and $D^{+*}$ are bipartite. Hence $101$ and $110$
hold, whereas $011$ does not. Since $101+110=011$
in $\Ftwo^3$, the Fano-line implication on
$\{011,101,110\}$ fails. Thus Theorem~\ref{thm:fano-framework}
does not hold for all vf-safe
delta-matroids.
\end{example}

\section*{Acknowledgements}
This work is supported by NSFC (Nos. 12571379, 12471326).

\end{document}